\newtheorem{theorem}{Theorem}[section]
\newtheorem{lemma}[theorem]{Lemma}
\newtheorem{proposition}{Proposition}
\theoremstyle{definition}
\newtheorem{definition}[theorem]{Definition}
\newtheorem{remark}{Remark}
\newcommand{\ep}{\varepsilon}
\newcommand{\B}{\mathcal{B}}
\newcommand{\D}{\mathcal{D}}
\newcommand{\E}{\mathcal{E}}
\newcommand{\F}{\mathcal{F}}
\newcommand{\R}{\mathbb{R}}
\newcommand{\N}{\mathbb{N}}
\newcommand{\dd }{\,\mathrm{d}}
\newcommand{\X}{X}
\newcommand{\EW}{\mathbb{E}}
\newcommand{\vX}{\widehat{\X}}
\title{Probabilistic Interpretation of the Calder\'{o}n Problem}
\author{Petteri Piiroinen}
\address{Petteri Piiroinen\\ÊDepartment of Mathematics and Statistics\\  University of Helsinki \\ \phantom{Ho}FI-00014 Helsinki, Finland}
\email{petteri.piiroinen@helsinki.fi}
\author{Martin Simon}
\address{Martin Simon\\ÊInstitute of Mathematics\\ Johannes Gutenberg University\\ 55099 Mainz, \phantom{Ho}Germany} 
\email{simon@math.uni-mainz.de}
\date{\today}
\chardef\bslchar=`\\
\providecommand{\qedsymbol}{\leavevmode
  \hbox to.77778em{%
  \hfil\vrule
  \vbox to.675em{\hrule width.6em\vfil\hrule}%
  \vrule\hfil}}
\gdef\?#1>{{\normalfont$\langle$\textup{#1}$\rangle$}}
\gdef\0{\relax}
\def\<#1>{{\normalfont$\langle$\textup{#1}$\rangle$}}
\def\latex/{{\protect\LaTeX}}
\begin{document}
\maketitle
\markboth{Petteri Piiroinen and Martin Simon}{Probabilistic Interpretation of the Calder\'{o}n Problem}
\begin{abstract}
In this paper, we use the theory of symmetric Dirichlet forms to 
give a probabilistic
interpretation of Calder\'{o}n's inverse conductivity problem in terms
of reflecting diffusion processes and their corresponding boundary trace
processes.
\end{abstract}

\section{Introduction}
Electrical impedance tomography (EIT) aims to reconstruct the unknown
conductivity $\kappa$ in the conductivity equation 
\begin{equation}\label{eqn:con}
\nabla\cdot(\kappa\nabla u)=0\quad\text{in }D
\end{equation}
from current and voltage measurements on the boundary of the domain $D$.
This inverse conductivity problem is known to be severely ill-posed,
that is, its solution is extremely sensitive with respect to measurement
and modeling errors. The uniqueness question, first proposed by Calder\'{o}n has been studied in  
extensively. And while it has been answered affirmatively for isotropic conductivities in the two-dimensional case by Astala and P\"aiv\"arinta \cite{AstalaP},
it is still unsettled, at least in full generality, in higher dimensions.  
In the previous work \cite{PiiSimon14} the authors have developed a probabilistic interpretation of the forward problem in form of a Feynman-Kac formula using reflecting diffusion processes.
In this work, we extend this probabilistic interpretation to Calder\'{o}n's inverse problem.
More precisely, we study the \emph{time-changed
process } $\widehat{\X}$ on $\partial D$ of
the reflecting diffusion process $\X$ with respect to the so-called \emph{local time on
the boundary}. For the special case of the reflecting Brownian motion on
the planar unit disk, the Beurling-Deny decomposition of this so-called
\emph{boundary process }is given by the
well-known \emph{Douglas integral}
\begin{equation*}
\int_D\lvert\nabla\mathcal{H}\phi(x)\rvert^2\dd x=\int_{\partial
D\times\partial D\backslash \delta}(\phi(\xi)-\phi(\eta))^2(4\pi
(1-\cos(\xi-\eta)))^{-1}\dd\xi\dd\eta,
\end{equation*} 
where $\mathcal{H}\phi$ denotes the harmonic function on $D$ with
Dirichlet boundary value $\phi$.
As a consequence, $\widehat{\X}$ is of pure jump type and its jumping
mechanism, the \emph{L\'{e}vy system}, is
governed by the \emph{Feller kernel } $(4\pi
(1-\cos(\xi-\eta)))^{-1}$. In this special case, $\widehat{\X}$ is the
\emph{symmetric Cauchy process }on the
unit circle, which leads somewhat naturally to the following
\emph{probabilistic inverse problem}: \emph{Is the reflecting Brownian motion the unique reflecting
diffusion process on the planar unit disk, whose boundary process is the
symmetric Cauchy process on the unit circle?} Clearly, this question can
be answered affirmatively for isotropic conductivities, however, the
authors are not aware of a \emph{probabilistic} proof.

The major goal of this work is to generalize the considerations from above in order to formulate
probabilistic inverse problems which are equivalent to Calder\'{o}n's
problem. We prove that the boundary process is of pure jump type and that its jumping measure is governed by the 
L\'{e}vy system of the boundary process. Moreover, we show that the latter is 
completely determined by the transition kernel density of the corresponding 
\emph{absorbing diffusion process} on $D$. We give explicit descriptions of both 
the L\'{e}vy system as well as the infinitesimal generator of the boundary process. 
This generalizes results by Hsu \cite{Hsu2} for the reflecting Brownian motion and 
enables the formulation of Calder\'{o}n's
problem in the form of equivalent probabilistic inverse problems in terms of the
\emph{excursion law} of the reflecting diffusion process and the L\'{e}vy system of the boundary process, respectively. 
We would like to stress the fact that in contrast to the reflecting Brownian motion, the reflecting diffusion process we study in this work
are in general not in the class of solutions to stochastic differential equations. In particular, we can not rely on It\^{o} calculus. 

The rest of this paper is structured as follows: We start in Section \ref{section2} by introducing our notation.
In Section \ref{section1a}, we recall both the forward problem of EIT as well as its probabilistic interpretation from \cite{PiiSimon14}.
Then, in Section \ref{section:BP}, we present two equivalent methods to define
the boundary process of a reflecting diffusion. The first one uses the
so-called \emph{trace Dirichlet form }and
its potential theory, whereas the second one is purely probabilistic
using time change with respect to the boundary local time. Moreover, we prove that the Dirichlet-to-Neumann map
is the \emph{infinitesimal generator }of
the boundary process. Finally, we study a L\'{e}vy system of the boundary process and the \emph{excursions} of
$\X$ which leads to the formulation of three equivalent probabilistic
inverse problems in Section \ref{section:probinv}. 

\section{Notation}
\label{section2}
Let $D$ denote a bounded Lipschitz domain in $\R^d$, $d\geq 2$, with
connected complement and Lipschitz parameters $(r_D,c_D)$, i.e., for
every $x\in\partial D$ we have after rotation and translation that
$\partial D\cap B(x,r_D)$ is the graph of a Lipschitz function in the
first $d-1$ coordinates with Lipschitz constant no larger than $c_D$ and
$D\cap B(x,r_D)$ lies above the graph of this function. Moreover, we set
$\R^d_-:=\{x\in\R^d: x\cdot \nu<0\}$, with $\nu=e_d$ the outward unit
normal on $\R^{d-1}$, where we identify the boundary of $\R^d_-$ with
$\R^{d-1}$, with straightforward abuse of notation. 

For Lipschitz domains, there exists a unique outward unit normal vector
$\nu$ a.e.\ on $\partial D$ so that the real Lebesgue spaces $L^p(D)$ and
$L^p(\partial D)$ can be defined in the standard manner with the usual
$L^p$ norms $\lvert\lvert\cdot\rvert\rvert_p$, $p=1,2,\infty$. The
standard $L^2$ inner-products are denoted by $\left<\cdot,\cdot\right>$
and $\left<\cdot,\cdot\right>_{\partial D}$, respectively. The
$d$-dimensional Lebesgue measure is denoted by $m$, the
$(d-1)$-dimensional Lebesgue surface measure is denoted by $\sigma$ and
$\lvert\cdot\rvert$ denotes the Euclidean norm on $\R^d$.

All functions in this work will be real-valued and derivatives are
understood in distributional sense. We use a diamond subscript to denote
subspaces of the standard Sobolev spaces containing functions with
vanishing mean and interpret integrals over $\partial D$ as dual
evaluations with a constant function, if necessary. For example, we will
frequently use the spaces
\begin{equation*}
H^{\pm 1/2}_{\diamond}(\partial D):=\Big\{\phi\in H^{\pm 1/2}(\partial
D):\left<\phi,1\right>_{\partial D}=0\Big\}
\end{equation*}
and
\begin{equation*}
H^{1}_{\diamond}(D):=\Big\{\phi\in H^{1}(D):\left<\phi,1\right>=0\Big\}.
\end{equation*}

Moreover, we will frequently assume that $\partial D$ is partitioned
into two disjoint parts, $\partial_1 D$ and $\partial_2 D$. We denote by
$H_0^1(D\cup\partial_1 D)$ the closure of $C_c^{\infty}(D\cup\partial_1
D)$, the linear subspace of $C^{\infty}(\overline{D})$ consisting of
functions $\phi$ such that $\mathrm{supp}(\phi)$ is a compact subset of
$D\cup\partial_1 D$, in $H^1(D)$.
For the reason of notational compactness, we use the Iverson brackets:
Let $S$ be a mathematical statement, then 
\begin{equation*}\left[S\right]=\begin{cases}
1,\quad&\text{if }S\text{ is true}\\
0,\quad&\text{otherwise}.
\end{cases}
\end{equation*}
We also use the Iverson brackets $[x\in B]$ to denote the indicator
function of a set $B$, which we abbreviate by $[B]$ if there is no
danger of confusion. 

In what follows, all unimportant constants are denoted $c$, sometimes
with additional subscripts, and they may vary from line to line. 


\section{The forward problem and its probabilistic interpretation}\label{section1a}

We assume that the, possibly anisotropic, conductivity is defined by a symmetric,
matrix-valued function $\kappa:D\rightarrow \mathbb{R}^{d\times d}$ with
components in $L^{\infty}(D)$ such that $\kappa$ is uniformly bounded
and uniformly elliptic, i.e., there exists some constant $c>0$ such that
\begin{equation}\label{eqn:ellipticity}
c^{-1}\lvert\xi\rvert^2\leq \xi\cdot\kappa (x)\xi\leq
c\lvert\xi\rvert^2,\quad \text{for every
}\xi\in\mathbb{R}^d\text{ and a.e. }x\in D.  
\end{equation}

The forward problem of electrical impedance tomography can be described
by different measurement models. In the so-called \textit{continuum
model}, the conductivity equation (\ref{eqn:con})
is equipped with a co-normal boundary condition 
\begin{equation}\label{eqn:continuum}
\partial_{\kappa\nu}u:= \kappa\nu\cdot\nabla u\vert_{\partial D}=f\quad\text{on }\partial D,
\end{equation}
where $f$ is a measurable function modeling the signed density of the
outgoing current. The boundary value problem (\ref{eqn:con}),
(\ref{eqn:continuum}) has a solution if and only if 
\begin{equation}\label{eqn:compatibility}
\left<f,1\right>_{\partial D}=0.
\end{equation}
Physically speaking, this means that the current must be conserved.
Given an appropriate function $f$, the solution to (\ref{eqn:con}),
(\ref{eqn:continuum}) is unique up to an additive constant, which
physically corresponds to the choice of the ground level of the
potential. If $f\in H^{-1/2}_{\diamond}$, then there exists a unique
equivalence class of functions $u\in H^1(D)/\R$ that satisfies the weak
formulation of the boundary value problem
\begin{equation*}
\int_D\kappa\nabla u\cdot\nabla v\dd x=\left <f, v\vert_{\partial D}\right
>_{\partial D}\quad \text{for all }v\in H^1(D)/\R,
\end{equation*} 
where $v\vert_{\partial D}:=\gamma v$ and $\gamma:H^1(D)/\R\rightarrow
H^{1/2}(\partial D)/\R=(H^{-1/2}_{\diamond}(\partial D))'$ is the
standard trace operator. Note that we occasionally write $v$ instead of
$v\vert_{\partial D}$ for the sake of readability.

In his seminal paper \cite{Fukushima}, Fukushima established a
one-to-one correspondence between regular symmetric Dirichlet forms and
symmetric Hunt processes, which is the foundation for the construction
of stochastic processes via Dirichlet form techniques. Therefore we
assume that the reader is familiar with the theory of symmetric
Dirichlet forms, as elaborated for instance in the monograph
\cite{Fukushimaetal}.

Let us consider the following symmetric bilinear forms on $L^2(D)$:
\begin{equation}\label{eqn:Dirichlet}
\E(v,w):=\int_{D}\kappa\nabla v(x)\cdot\nabla w(x)\dd  x,\quad
v,w\in\D(\E):=H^1(D)
\end{equation}
and for the particular case $\kappa\equiv 1/2$, which is of special
importance, we set
\begin{equation}\label{eqn:Dirichlet1}
\E^{\text{BM}}(v,w):=\frac{1}{2}\int_{D}\nabla v(x)\cdot\nabla w(x)\dd  x,\quad
v,w\in\D(\E^{\text{BM}}):=H^1(D).
\end{equation} 

The pair $(\E, \D(\E))$ defined by (\ref{eqn:Dirichlet}) is a strongly
local, regular symmetric Dirichlet
form on $L^2(D)$. In particular, there exist an $\E$-exceptional set
$\mathcal{N}\subset\overline{D}$ and a conservative diffusion process
$\X=(\Omega,\F,\{\X_t,t\geq 0\},\mathbb{P}_{x})$, starting from 
$x\in\overline{D}\backslash \mathcal{N}$ such that 
$\X$ is associated with $(\E,\D(\E))$. Without loss of generality let us
assume that $\X$ is defined on the \emph{canonical
sample space }$\Omega=C([0,\infty);\overline{D})$. 
It is well-known that the symmetric Hunt process associated with
(\ref{eqn:Dirichlet1}) is the reflecting Brownian motion. Therefore, we
call the symmetric Hunt process associated with (\ref{eqn:Dirichlet}) a
\emph{reflecting diffusion process}. 

Let us briefly recall the concept of the \emph{boundary local time }of
reflecting diffusion processes, see, e.g.,
\cite{Pardoux,Hsu,PiiSimon14}. If the diffusion
process is the solution to a stochastic differential equation, say the
reflecting Brownian motion, then the boundary local time is given by the
one-dimensional process $L$ in the Skorohod decomposition, which
prevents the sample paths from leaving $\overline{D}$, i.e.,
\begin{equation}\label{eqn:SkorohodRBM}
\X_t=x+W_t-\frac{1}{2}\int_0^t\nu(\X_s)\dd L_s, 
\end{equation} 
$\mathbb{P}_x$-a.s.\ for q.e.\ $x\in\overline{D}$. This boundary local
time is a continuous non-decreasing process which increases only when
$X_t\in\partial D$, namely for all $t\geq 0$ and q.e.\ $x\in\overline{D}$
\begin{equation*}
L_t=\int_0^t[\partial D](\X_s)\dd L_s,
\end{equation*}
$\mathbb{P}_x$-a.s.\ and
\begin{equation*}
\EW_x\int_0^t[\partial D](\X_s)\dd s=0. 
\end{equation*}
Although the reflecting diffusion process associated with
(\ref{eqn:Dirichlet}) does in general not admit a Skorohod decomposition
of the form (\ref{eqn:SkorohodRBM}), we may still define a continuous
one-dimensional process with these properties. More precisely, by the
Lipschitz property of $\partial D$, we have that $D\cap
B(x,r_D)=\{(\tilde{x},x_d):x_d>\gamma(\tilde{x})\}\cap B(x,r_D)$ and the
Lipschitz function $\gamma$ is differentiable a.e.\ with a bounded
gradient. In particular, we have for every Borel set $B\subset \partial
D\cap B(x,r_D)$ that
\begin{equation*}
\sigma(B)=\int_{\{\tilde{x}:(\tilde{x},\gamma(\tilde{x}))\in
B\}}\Big(1+\lvert \nabla
\gamma(\tilde{x})\rvert^2\Big)^{1/2}\dd\tilde{x}
\end{equation*}
and a straightforward computation yields that the Lebesgue surface
measure $\sigma$ is a smooth measure with respect to $(\E,\D(\E))$
having finite energy, i.e., 
\begin{equation*}
\int_{\partial D}\lvert v\rvert \dd\sigma(x)\leq c\lvert\lvert
v\rvert\rvert_{\E_1}\quad \text{for all }v\in \D(\E)\cap
C(\overline{D}),
\end{equation*}
where we have used the inner product
$\E_1(\cdot,\cdot):=\E(\cdot,\cdot)+\left<\cdot,\cdot\right>$.
\begin{definition}
The positive continuous additive functional of $X$ whose Revuz measure
is given by the Lebesgue surface measure $\sigma$ on $\partial D$, i.e.,
the unique $L \in\mathcal{A}_c^+$ such that
\begin{equation}\label{eqn:Revuz}
\lim_{t\rightarrow 0+}\frac{1}{t}\int_D\EW_x\Big\{\int_0^t\phi(\X_s)\dd
L_s\Big\}\psi(x)\dd x= \int_{\partial D}\phi(x)\psi(x)\dd\sigma(x)
\end{equation}
for all non-negative Borel functions $\phi$ and all $\alpha$-excessive
functions $\psi$, is called the \emph{boundary local time} of the
reflecting diffusion process $\X$.
\end{definition}

It has been shown in the recent work \cite{PiiSimon14} that the $\E$-exceptional
set $\mathcal{N}$ is actually empty. 

\begin{proposition}[{\cite[Proposition 1]{PiiSimon14}}]\label{thm:1}
$p\in C^{0,\delta}((0,T]\times\overline{D}\times\overline{D})$ for some
$\delta\in(0,1)$, i.e., for each fixed $0<t_0\leq T$, there exists a
positive constant $c$ such that 
\begin{equation}\label{eqn:Hold}
\lvert p(t_2,x_2,y_2)-p(t_1,x_1,y_1)\rvert\leq
c(\sqrt{t_2-t_1}+\lvert x_2-x_1\rvert+\lvert y_2-y_1\rvert)^{\delta}
\end{equation}
for all $t_0\leq t_1\leq t_2\leq T$ and all
  $(x_1,y_1),(x_2,y_2)\in\overline{D}\times\overline{D}$.
Moreover, the mapping $t\mapsto p(t,\cdot,\cdot)$ is analytic from
$(0,\infty)$ to $C^{0,\delta}(\overline{D}\times\overline{D})$.  
\end{proposition}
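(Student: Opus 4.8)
The plan is to identify $p$ as the heat kernel of the nonnegative self-adjoint operator $A=-\nabla\cdot(\kappa\nabla\,\cdot\,)$ with co-normal boundary condition, that is, the operator associated with the Dirichlet form $(\E,\D(\E))$ of \eqref{eqn:Dirichlet}, and to read off its regularity from parabolic De Giorgi--Nash--Moser theory together with the analyticity of a self-adjoint semigroup. First I would produce a bounded, jointly measurable, symmetric transition density. Because $\kappa$ is uniformly elliptic with entries in $L^\infty(D)$, the form $\E$ satisfies a Nash inequality on the bounded domain $D$, so the associated Markovian semigroup $\{T_t\}$ is ultracontractive, $T_t\colon L^1(D)\to L^\infty(D)$ for every $t>0$; this produces the symmetric kernel $p(t,x,y)=p(t,y,x)$, bounded for each $t>0$ and satisfying Aronson-type Gaussian upper bounds, which on the bounded set $\overline{D}$ localize by the volume-doubling and chaining properties of $\overline{D}$.

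For the H\"older estimate \eqref{eqn:Hold} I would observe that, for each fixed $y$, the function $(t,x)\mapsto p(t,x,y)$ is a weak solution of $\partial_t u=\nabla\cdot(\kappa\nabla u)$ in $(0,T]\times D$ subject to the homogeneous co-normal condition on $(0,T]\times\partial D$. Interior parabolic De Giorgi--Nash--Moser estimates then yield local H\"older continuity in $(t,x)$ away from $\partial D$, with an exponent $\delta\in(0,1)$ and constants depending only on $d$ and the ellipticity constant in \eqref{eqn:ellipticity}. The continuity up to the boundary is the decisive step: near a point of $\partial D$ I would flatten the boundary using the Lipschitz graph representation $D\cap B(x,r_D)=\{(\tilde x,x_d):x_d>\gamma(\tilde x)\}\cap B(x,r_D)$ from Section~\ref{section2} and reflect the solution across the flattened interface into the model half-space $\R^d_-$, thereby converting the homogeneous Neumann problem, locally, into an interior parabolic problem for a new divergence-form operator whose coefficients remain bounded and uniformly elliptic. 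Applying the interior estimate to the reflected solution gives H\"older continuity up to $\partial D$; the symmetry $p(t,x,y)=p(t,y,x)$ transfers this to the second spatial variable, and combining the spatial modulus with the parabolic $\sqrt{t}$-scaling produces the stated modulus $(\sqrt{t_2-t_1}+|x_2-x_1|+|y_2-y_1|)^\delta$, uniformly over $[t_0,T]$.

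For the analyticity I would use self-adjointness directly. Since $A$ is nonnegative and self-adjoint, the spectral calculus shows that $s\mapsto e^{-sA}$ is holomorphic from the right half-plane $\{\operatorname{Re} s>0\}$ into $\mathcal{L}(L^2(D),\D(A^k))$ for every $k$, the target carrying the graph norm. Iterating the elliptic (stationary) H\"older estimate for the Neumann problem, the elliptic counterpart of the bounds just obtained, gives the embedding $\D(A^k)\hookrightarrow C^{0,\delta}(\overline{D})$ for $k$ large, so that $s\mapsto e^{-sA}$ is in fact holomorphic into $\mathcal{L}(L^2(D),C^{0,\delta}(\overline{D}))$. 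Writing, for complex $t$ near a fixed $t_0>0$, $p(t,\cdot,\cdot)=(e^{-\sigma A}\otimes e^{-\sigma A})\,p(t_0,\cdot,\cdot)$ with $\sigma=(t-t_0)/2$ and $p(t_0,\cdot,\cdot)\in L^2(D\times D)$, and applying the smoothing in each variable separately, shows that $t\mapsto p(t,\cdot,\cdot)$ extends holomorphically to a neighborhood of $(0,\infty)$ as a $C^{0,\delta}(\overline{D}\times\overline{D})$-valued map, hence is analytic there.

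The main obstacle is the boundary regularity for the co-normal (Neumann) problem on a merely Lipschitz domain with $L^\infty$ coefficients. In contrast to the Dirichlet case one cannot extend by zero, so the reflection must be arranged so that the Lipschitz change of variables preserves the divergence form with bounded, uniformly elliptic coefficients and turns the vanishing conormal flux into the correct matching condition across the interface. Securing constants that are uniform as the base point ranges over $\partial D$, so that the local estimates patch into the single global modulus \eqref{eqn:Hold}, is where the bulk of the technical work lies.
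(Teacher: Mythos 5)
The paper itself contains no proof of this proposition: it is imported verbatim from \cite[Proposition 1]{PiiSimon14}, so there is no in-text argument to compare yours against step by step; what can be said is that your route runs along the same circle of ideas that the cited work draws on (ultracontractivity of the Neumann semigroup, De Giorgi--Nash--Moser regularity made applicable at the boundary via the Lipschitz structure, and self-adjoint semigroup analyticity), and that it is essentially sound. The core steps are right: the Nash inequality holds because the bounded Lipschitz domain $D$ is an $H^1$-extension domain, giving the bounded symmetric kernel; flattening by the bi-Lipschitz graph map preserves the divergence form with $L^\infty$ uniformly elliptic coefficients, and the even reflection (with the sign flip in the mixed normal--tangential entries of the transformed matrix) turns the conormal problem into an interior one, so the interior parabolic estimate does yield \eqref{eqn:Hold} with constants uniform in the base point, since the Lipschitz parameters $(r_D,c_D)$ are uniform; symmetry then transfers the modulus to the $y$-variable.

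Two points need more care than you give them. First, the embedding $\D(A^k)\hookrightarrow C^{0,\delta}(\overline{D})$ does not follow by ``iterating the elliptic H\"older estimate'' alone: with right-hand side only in $L^2$ the De Giorgi--Nash estimate is not applicable once $d\ge 4$. You must first use the ultracontractive bound $\lVert T_t\rVert_{L^1\to L^\infty}\le c\,t^{-d/2}$ (equivalently $(1+A)^{-m}\colon L^2(D)\to L^\infty(D)$ bounded for $m>d/4$) to upgrade the data to $L^q$ with $q>d/2$, and only then apply the elliptic H\"older estimate once; this is a fixable gap, since ultracontractivity is already available from your first step. Second, in the tensorized analyticity argument you implicitly need $\D\bigl((A\otimes I+I\otimes A)^k\bigr)\hookrightarrow C^{0,\delta}(\overline{D}\times\overline{D})$; this does hold, because $D\times D$ is again a bounded Lipschitz domain and $\mathrm{diag}(\kappa(x),\kappa(y))$ is uniformly elliptic with $L^\infty$ entries, but it is a statement about a $2d$-dimensional Neumann problem and should be justified rather than absorbed into ``applying the smoothing in each variable separately.'' The appeal to Aronson-type Gaussian bounds and chaining is dispensable: boundedness of the kernel for $t\ge t_0/2$ is all your argument actually uses.
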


By \cite[Theorem 2]{Fukushima2}, the existence of a H\"older continuous
transition kernel density ensures that we may refine the process $\X$ to
start from every $x\in\overline{D}$ by identifying the strongly
continuous semigroup $\{T_t,t\geq 0\}$ with the transition semigroup
$\{P_t,t\geq 0\}$. In particular, if $v$ is continuous and locally in
$H^1(D)$, the Fukushima decomposition holds for every
$x\in\overline{D}$, i.e., 
\begin{equation}\label{eqn:Fukudecomp}
v(\X_t)=v(\X_0)+M_t^{v}+N_t^{v},\quad\text{for all }
t> 0,
\end{equation}
$\mathbb{P}_{x}$-a.s.,
where $M^{v}$ is a martingale additive functional of $\X$ having finite
energy and $N^{v}$ is a continuous additive functional of $\X$ having
zero energy. 

Moreover, both $M^{v}$ and $N^{v}$ can be taken to be additive
functionals of $\X$ in the strict sense, cf.
\cite[Theorem 5.2.5]{Fukushimaetal}. 

Finally, note that the $1$-potential of the Lebesgue surface measure
$\sigma$ of $\partial D$ is the solution to an elliptic boundary value
problem on a Lipschitz domain with bounded data. By elliptic regularity
theory, cf., e.g.,  \cite{Grisvard}, this solution is continuous,
implying that the boundary local time $L$
exists as a positive continuous additive functional in the strict sense,
cf.~\cite[Theorem 5.1.6]{Fukushimaetal}. 

For the probabilistic interpretation of the Neumann boundary value
problem corresponding to the continuum model, we require the
following assumption on the conductivity $\kappa$:
\begin{enumerate}
\item[(A1)]
There exists a neighborhood $\mathcal{U}$ of the boundary $\partial D$
such that $\kappa\vert_{\mathcal{U}}$ is isotropic and equal to $1$. 
\item[(A2)]
There exists a neighborhood $\mathcal{U}$ of the boundary $\partial D$
such that $\kappa\vert_{\mathcal{U}}$ is H\"older continuous.
\end{enumerate}
\begin{remark}
Notice that, theoretically, the assumption (A1) imposes no restriction to generality. 
More precisely, it can be shown using extension techniques that for domains $\widehat{D}, D\subset
\R^d$ such that $D\subset\widehat{D}$, the knowledge of both, the
Dirichlet-to-Neumann map $\Lambda_{\kappa}$ on $\partial D$ and
$\kappa\vert_{\widehat{D}\backslash\overline{D}}$ yields the
Dirichlet-to-Neumann map $\hat{\Lambda}_{\kappa}$ on $\partial \widehat{D}$. 
\end{remark}

The main result for the continuum model (\ref{eqn:con}),
(\ref{eqn:continuum}) is the following theorem from \cite{PiiSimon14}.
\begin{theorem}[{ \cite[Theorem 6.4.]{PiiSimon14}}]\label{thm:cont}
Assume that $\kappa$ satisfies (A1) or (A2) and let $D$ denote a bounded Lipschitz domain. Let $f$ be a bounded Borel function satisfying
$\left<f,1\right>_{\partial D}=0$.  Then there is a unique weak solution
$u\in C(\overline{D})\cap H^1_{\diamond}(D)$ to the boundary value
problem (\ref{eqn:con}), (\ref{eqn:continuum}). This solution admits the
Feynman-Kac representation
\begin{equation}
u(x)=\lim_{t\rightarrow\infty}\EW_{x}\int_0^t f(\X_s)\dd 
L_s\quad\text{for all }x\in\overline{D}.
\end{equation}  
\end{theorem}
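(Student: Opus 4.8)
The plan is to combine the variational theory of the conductivity equation with the Fukushima decomposition and the ergodicity of the reflecting diffusion. First I would produce the weak solution by the Lax--Milgram theorem: the form $\E$ is coercive on $H^1_{\diamond}(D)$ thanks to the ellipticity bound (\ref{eqn:ellipticity}) together with a Poincar\'e inequality, while the linear functional $v\mapsto\langle f,v\rangle_{\partial D}$ is bounded on $H^1(D)$ because $f$ is bounded and $\sigma$ has finite energy. This yields a unique $u\in H^1_{\diamond}(D)$ with $\E(u,v)=\langle f,v\rangle_{\partial D}$ for all $v\in\D(\E)$, the normalization $\langle u,1\rangle=0$ selecting the distinguished representative of the class in $H^1(D)/\R$. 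Continuity $u\in C(\overline{D})$ up to the boundary then follows from elliptic regularity for the co-normal problem on a Lipschitz domain; this is exactly where (A1) or (A2) enters, guaranteeing that the boundary behaviour is regular enough to invoke \cite{Grisvard}.

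The heart of the argument is to express $u(x)$ through the Fukushima decomposition (\ref{eqn:Fukudecomp}). Since $u$ is continuous, lies in $H^1(D)$, and the exceptional set is empty, the decomposition $u(\X_t)=u(\X_0)+M_t^{u}+N_t^{u}$ holds for every $x\in\overline{D}$. I would identify the zero-energy part $N^{u}$ by the Revuz correspondence: because $u$ is the weak solution, one has $\E(u,w)=\langle f,w\rangle_{\partial D}=\int_{\partial D}f\,w\,\dd\sigma$ for all $w\in\D(\E)$, so the signed smooth measure representing $u$ is $f\,\sigma$, which has finite energy. As $L$ is the positive continuous additive functional with Revuz measure $\sigma$, the additive functional with Revuz measure $f\,\sigma$ is $\int_0^{\cdot}f(\X_s)\,\dd L_s$, whence $N_t^{u}=-\int_0^t f(\X_s)\,\dd L_s$. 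Taking $\EW_x$ and using that $M^{u}$ is a martingale of zero expectation gives the exact identity
\[
\EW_x\int_0^t f(\X_s)\,\dd L_s=u(x)-\EW_x[u(\X_t)]=u(x)-P_tu(x).
\]

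It then remains to pass to the limit $t\to\infty$. The reflecting diffusion $\X$ is conservative on the bounded set $\overline{D}$ and symmetric with respect to Lebesgue measure $m$, which is therefore a finite invariant measure; by the continuity and positivity of the transition density from Proposition \ref{thm:1} the process is irreducible, so its semigroup converges to equilibrium and $P_tu(x)\to m(D)^{-1}\int_D u\,\dd m$ for every $x$. The normalization $u\in H^1_{\diamond}(D)$ says precisely that this spatial average vanishes, so $P_tu(x)\to 0$ and the claimed Feynman--Kac representation follows for all $x\in\overline{D}$.

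I expect the main obstacle to be the rigorous identification of $N^{u}$ with the boundary local time integral. Since $u$ need not lie in the domain of the generator $\mathcal{L}$, one cannot simply write $N_t^{u}=\int_0^t\mathcal{L}u(\X_s)\,\dd s$; instead I would use the general characterization of the zero-energy continuous additive functional through its associated signed smooth measure of finite energy, checking that $f\,\sigma$ meets the finite-energy condition recorded before the definition of $L$. Some care is also needed to ensure $P_tu(x)\to 0$ holds pointwise for every starting point $x\in\overline{D}$, not merely quasi-everywhere, which again relies on the emptiness of the exceptional set and the H\"older continuity of $p$.
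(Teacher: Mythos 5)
Your proposal is correct in substance, but it follows a genuinely different route from the paper: Theorem \ref{thm:cont} is not proved in the present paper at all\textemdash it is imported from \cite[Theorem 6.4]{PiiSimon14}, and, as Remark \ref{rem:1} indicates, the proof given there rests on a regularization technique (establish the formula in a regularized setting, then pass to the limit), not on the direct identification you carry out. Your argument stays entirely inside Dirichlet-form theory: Lax--Milgram on $H^1_{\diamond}(D)$ with coercivity from (\ref{eqn:ellipticity}) and Poincar\'e, continuity up to the boundary from elliptic regularity under (A1)/(A2) (the same argument the paper uses for the $1$-potential of $\sigma$, citing \cite{Grisvard}), the strict-sense Fukushima decomposition (\ref{eqn:Fukudecomp}), the identification $N^u_t = -\int_0^t f(\X_s)\dd L_s$ via the characterization of zero-energy CAFs of bounded variation through signed smooth measures (cf. \cite[Theorem 5.4.2]{Fukushimaetal}), applied to $\mu = f\sigma$ because $\E(u,v) = \int_{\partial D} f\tilde{v}\dd\sigma$ for all $v$, and finally $\EW_x\int_0^t f(\X_s)\dd L_s = u(x) - P_t u(x)$ together with convergence of $P_t u$ to the (vanishing) spatial mean. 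This is arguably the more intrinsic proof, and it is well adapted to the setting the paper emphasizes, in which $\X$ is not the solution of an SDE and It\^{o} calculus is unavailable; the regularization route of \cite{PiiSimon14} instead reduces matters to a situation where classical tools apply, at the price of a limiting argument.

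Two steps in your write-up need tightening, though neither is a fatal gap. First, the identity $N^u_t=-\int_0^t f(\X_s)\dd L_s$ and the vanishing of $\EW_x M^u_t$ hold a priori only for q.e.\ $x$; to obtain them for every $x\in\overline{D}$ you must invoke the strict-sense refinements recorded in Section \ref{section1a} (empty exceptional set, $L$ a PCAF in the strict sense, $M^u$ and $N^u$ additive functionals in the strict sense, cf.\ \cite[Theorems 5.1.6 and 5.2.5]{Fukushimaetal}); you flag this, but it is the crux of the pointwise statement rather than a side condition. Second, Proposition \ref{thm:1} gives H\"older continuity, not strict positivity, of $p$, so irreducibility is not available off the shelf; it is cleaner to get $P_t u(x)\to 0$ from the spectral gap: the Poincar\'e inequality and (\ref{eqn:ellipticity}) give $\lVert P_t u\rVert_2 \le e^{-\lambda t}\lVert u\rVert_2$ for mean-zero $u$, and then $\lvert P_{t+1}u(x)\rvert \le \lVert p(1,x,\cdot)\rVert_2\,\lVert P_t u\rVert_2 \to 0$ for every $x\in\overline{D}$. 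With these two repairs your argument is a complete, self-contained alternative proof of the statement.
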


\begin{remark}\label{rem:1}
We would like to emphasize that the regularization technique employed in the proof of
\cite[Theorem 6.4.]{PiiSimon14} may be easily modified to prove the Feynman-Kac
formula 
\begin{equation*}
u(x)=\EW_x\phi(\X_{\tau(D)}),\quad x\in D
\end{equation*}
for the conductivity equation (\ref{eqn:con}) with Dirichlet boundary
condition $u\vert_{\partial D}=\phi,$ where $\phi\in H^{1/2}(D)$ and
$$\tau(D):=\inf\{t\geq 0:\X_t\in\R^d\backslash D\}$$ denotes the
\emph{first exit time} from the domain $D$. 
This follows from the Lipschitz property of $\partial D$,
implying that all points of $\partial D$ are
\emph{regular} in the sense of \cite[Chapter
4.2]{Karatzas}.
\end{remark}

\section{Boundary processes of reflecting diffusions}\label{section:BP}
\subsection{Definition and properties}
As we are going to use regularity results that are not readily available
for general Lipschitz domains, we assume  throughout this chapter that
$D$ has a smooth boundary in order to avoid technical difficulties. However, we expect our results to hold for general Lipschitz domains.

Let $\X^0$ denote the \emph{absorbing diffusion process }on $D$ which is obtained from the
reflecting diffusion process $\X$ on $\overline{D}$ by killing upon
hitting of $\R^d\backslash D$, i.e., 
\begin{equation}\X^0_t := \begin{cases}
\X_t,\quad&t\leq\tau(D)\\
\partial,\quad& t>\tau(D).
\end{cases}
\end{equation}  
By the Markov property of $\X$, 
$\X^0$ possesses a transition kernel density which may be expressed as
\begin{equation}\label{eqn:dens_dir}
p^0(t,x,y)= p(t,x,y)-\EW_x\big\{p(t-\tau(D),\X_{\tau(D)},y)[\tau(D)< t]\big\}
\end{equation} 
and the regular symmetric Dirichlet form on $L^2(D)$ associated with $\X^0$ is $(\E,H^1_0(D))$, cf. \cite{Fukushimaetal}.

\begin{lemma}\label{killed:diff}
Let $\kappa:\overline{D}\rightarrow \R^{d\times d}$ be a symmetric,
uniformly bounded and uniformly elliptic conductivity with components
$\kappa_{ij}\in C^{0,1}(\overline{D})$, $i,j=1,...,d$, such that
$\kappa$ satisfies (A1). Then the absorbing diffusion process $\X^0$
possesses a transition kernel density $p^0$ with the following
properties:
\begin{enumerate}
\item[$(i)$] $p^0$ is jointly H\"older continuous with respect to
  $(t,x,y)$; 
\item[$(ii)$]  $p^0 $ is in $H_0^1(D)$ as a function of $x$ and $y$,
respectively; 
\item[$(iii)$] $p^0\vert_{\mathcal{U}}\in C^1(\mathcal{U}\cup\partial D)\cap
C^2(\mathcal{U})$ as a function of $x$ and $y$, respectively, where
$\mathcal{U}$ denotes the neighborhood of $\partial D$ from assumption
(A1);
\item[(iv)] $p^0$ is continuously differentiable with respect to $t$ as
a Banach space valued map.
\end{enumerate}
\end{lemma}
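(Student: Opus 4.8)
The plan is to establish the four properties of $p^0$ by combining the explicit representation \eqref{eqn:dens_dir} with the regularity of the reflecting kernel $p$ from Proposition \ref{thm:1} and classical interior/boundary elliptic regularity, exploiting assumption (A1) which forces $\kappa$ to be smooth (isotropic, equal to $1$) near $\partial D$. The key conceptual point is that $p^0$ solves the parabolic equation $\partial_t p^0 = \frac{1}{2}\nabla_x\cdot(\kappa\nabla_x p^0)$ on $D$ (in each variable, by symmetry $p^0(t,x,y)=p^0(t,y,x)$) with zero Dirichlet boundary data, so away from the singularity at $t=0$, $x=y$ the function inherits as much smoothness as the coefficients and the boundary permit.

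First I would treat $(ii)$: since the Dirichlet form associated with $\X^0$ is $(\E,H^1_0(D))$, the semigroup $P_t^0$ maps $L^2(D)$ into $\D(\E)=H^1_0(D)$ for each $t>0$, and writing $P_t^0 \phi(x)=\int_D p^0(t,x,y)\phi(y)\dd y$ one identifies $p^0(t,x,\cdot)$ (and by symmetry $p^0(t,\cdot,y)$) as an element of $H^1_0(D)$; a density/duality argument upgrades this to the statement that the kernel itself lies in $H^1_0(D)$ in each spatial slot. For $(i)$, I would argue that $p^0$ is jointly Hölder continuous: the first term $p(t,x,y)$ is Hölder by Proposition \ref{thm:1}, and the correction term $\EW_x\{p(t-\tau(D),\X_{\tau(D)},y)[\tau(D)<t]\}$ is controlled using the same Hölder bound \eqref{eqn:Hold} together with the regularity of all boundary points (Remark \ref{rem:1}), which gives continuity of $x\mapsto\EW_x\{\cdots\}$ via the strong Markov property and continuity of the exit time/exit position in $x$.

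The heart of the matter is $(iii)$, the higher boundary regularity $p^0|_{\mathcal{U}}\in C^1(\mathcal{U}\cup\partial D)\cap C^2(\mathcal{U})$. Here I would localize to the neighborhood $\mathcal{U}$ where $\kappa\equiv 1$, so that in the boundary strip $p^0$ solves the classical heat equation $\partial_t p^0=\frac{1}{2}\Delta_x p^0$ with homogeneous Dirichlet condition on the smooth boundary $\partial D$. Interior Schauder estimates give $C^2$ smoothness in $\mathcal{U}$, and boundary Schauder estimates up to $\partial D$ (using the smoothness of $\partial D$ assumed throughout this section and the vanishing boundary data) give $C^1$ regularity up to and including $\partial D$; the Hölder continuity of the inhomogeneous data away from the diagonal, supplied by $(i)$, feeds these estimates. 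Finally, $(iv)$ follows from the analyticity of $t\mapsto p(t,\cdot,\cdot)$ stated in Proposition \ref{thm:1}, which transfers to $p^0$ through representation \eqref{eqn:dens_dir}, yielding differentiability of $t\mapsto p^0(t,\cdot,\cdot)$ as a Banach-space-valued map, indeed with values in the appropriate Hölder or Sobolev space.

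The main obstacle I anticipate is making the boundary regularity in $(iii)$ rigorous despite the singularity of the kernel at $t=0$ and on the diagonal: one must carefully separate the short-time/near-diagonal behavior from the global behavior. For fixed $t_0>0$ and $y$ bounded away from $\partial D$, the solution $p^0(t,\cdot,y)$ is a genuine solution of a uniformly parabolic equation with smooth coefficients near the boundary and smooth boundary geometry, so Schauder theory applies cleanly; the delicate case is when $y$ itself approaches $\partial D$, where the zero boundary condition and the Hopf-type decay of $p^0$ must be used to control $\nabla_x p^0$ uniformly up to $\partial D$. I would handle this by exploiting the symmetry $p^0(t,x,y)=p^0(t,y,x)$ together with the fact that $p^0$ vanishes on the boundary in both variables, reducing the two-sided boundary behavior to a single boundary-regularity estimate applied in each variable separately.
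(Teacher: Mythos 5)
Your treatment of $(i)$--$(iii)$ follows essentially the same route as the paper: $(i)$ is read off from Proposition~\ref{thm:1} and the representation \eqref{eqn:dens_dir}; $(iii)$ is obtained by localizing to the strip $\mathcal{U}$, where (A1) turns the problem into the classical heat equation with zero Dirichlet data on a smooth boundary (the paper simply cites the regularity of the fundamental solution of the heat equation, cf.~\cite{Hsu}, while you invoke Schauder estimates---same substance). For $(ii)$ your idea, semigroup smoothing, is also the paper's, but the phrase \qq{a density/duality argument upgrades this} is exactly where the content sits: knowing that $T^0_t\phi\in H^1_0(D)$ for every $\phi\in L^2(D)$ does not by itself place the \emph{kernel} in $H^1_0(D)$. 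The paper closes this by Chapman--Kolmogorov: $p^0(t,\cdot,x)=T^0_{t/2}\,p^0(t/2,\cdot,x)$, and $p^0(t/2,\cdot,x)$ is bounded by $(i)$, hence in $L^2(D)$ since $D$ is bounded, so the kernel is the image of an $L^2$ function under the semigroup and therefore lies in $\D(\mathcal{L})\subset H^1_0(D)$.

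The genuine gap is in $(iv)$. You claim the analyticity of $t\mapsto p(t,\cdot,\cdot)$ \qq{transfers to $p^0$ through \eqref{eqn:dens_dir}}; it does not transfer in any obvious way. You would have to differentiate $t\mapsto\EW_x\big\{p(t-\tau(D),\X_{\tau(D)},y)[\tau(D)<t]\big\}$ in $t$, and on the event $\{\tau(D)\approx t\}$ the time argument of $p$ degenerates to $0^+$, which is precisely where Proposition~\ref{thm:1} gives no control: the constant in \eqref{eqn:Hold} depends on the fixed $t_0>0$, and $\partial_t p(s,\cdot,\cdot)$ is singular as $s\to 0^+$. Controlling this contribution uniformly in $(x,y)$ --- which is needed for differentiability as a Banach-space-valued map, with $y$ allowed to approach $\partial D$ --- essentially requires the parabolic boundary estimates that appear only later (Lemma~\ref{Poissonkernel}), whose proof in turn relies on the present lemma; so your route is at best unjustified and at worst circular. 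The paper instead reruns the argument behind Proposition~\ref{thm:1} (see \cite{PiiSimon14}) for the killed semigroup itself: $\{T^0_t, t\geq 0\}$ is a self-adjoint Markovian contraction semigroup on $L^2(D)$ associated with the regular Dirichlet form $(\E,H^1_0(D))$, so the same spectral/analytic-semigroup reasoning applies verbatim and yields the differentiability (indeed analyticity) of $t\mapsto p^0(t,\cdot,\cdot)$ directly. Two cosmetic slips: the generator associated with \eqref{eqn:Dirichlet} is $\nabla\cdot(\kappa\nabla\,\cdot)$ with no factor $1/2$, and under (A1) one has $\kappa\equiv1$ near $\partial D$, so in $\mathcal{U}$ the equation is $\partial_t p^0=\Delta_x p^0$, not $\tfrac12\Delta_x p^0$.
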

\begin{proof}
The property $(i)$ follows directly from Proposition~\ref{thm:1} by (\ref{eqn:dens_dir}). 

To show the property $(ii)$, first note that for every $x \in D$, we
have $v_{x,t}(y) = p^0(t/2, y, x)$ is H\"older continuous for every $t > 0$
and in particular it is bounded and in $L^2(D)$ since $D$ is bounded.
This implies that for every $t > 0$ we know that
\[
T_{t/2} v_x(y) = \E_y v_x(\X_{t/2}) [\, t/2 < \tau_D \,] = \int_D p^0(t/2,y,z)
p^0(t/2,y,x) \dd z = v_{x,2t}(y)
\]
and moreover, $T_{t/2} v_x \in \D(\mathcal L) = H^1_0(D)$. This implies
the claim.

The property $(iii)$ is a direct consequence of assumption (A1) and the regularity of
the fundamental solution for the heat equation, cf. \cite{Hsu}.

Finally, $(iv)$ follows by the same reasoning we used in the proof of
Proposition \ref{thm:1}, see \cite{PiiSimon14}.
\end{proof}

A function $u\in H^1_{\text{loc}}(D)$ is said to be \emph{$\mathcal{L}$-harmonic}, provided 
\begin{equation}
\EW_x \lvert u(\X_{\tau(\widehat{D})})\rvert<\infty\quad\text{and}\quad u(x)=\EW_x u(\X_{\tau(\widehat{D})})\quad \text{for all }x\in \widehat{D}
\end{equation}
and every relatively compact open subset $\widehat{D}\subset D$, which is clearly equivalent to 
\begin{equation}\label{eqn:test}
\int_D\kappa\nabla u\cdot \nabla v\dd x= 0\quad\text{for all }v\in C_c(D),
\end{equation}
cf. Remark \ref{rem:1}. By the same remark, the \emph{$\mathcal{L}$-harmonic extension operator }$\mathcal{H}: L^{\infty}(\partial D)\cap H^{1/2-\ep}(\partial D)\rightarrow H_{\text{loc}}^1(D)$, $\ep\in [0,1/2)$, defined by 
\begin{equation*}
\mathcal{H}\phi(x):=\EW_x\phi(\X_{\tau(D)}),\quad x\in D
\end{equation*}
is well-defined. If $\ep\in(0,1/2)$, this is due to the fact that the Dirichlet problem
\begin{equation}\label{eqn:Dirichletprob}
\nabla\cdot(\kappa\nabla u)=0\quad\text{in }D,\quad u\vert_{\partial D}=\phi,
\end{equation}
admits a unique solution $u\in H^{1-\ep}(D)\cap H^1_{\text{loc}}(D)$ satisfying (\ref{eqn:test}), which follows from a saddle point formulation introduced in \cite{Necas}.

The following lemma generalizes a result from Aizenman and Simon
\cite{AizenmanSimon} for the absorbing Brownian motion.
\begin{lemma}
 \label{Poissonkernel}
Let $\kappa:\overline{D}\rightarrow \R^{d\times d}$ be a symmetric,
uniformly bounded and uniformly elliptic conductivity 
and suppose, in addition, $\kappa$ satisfies (A2). Then 
for every bounded Borel function $\phi$ on $\partial D$ we have
 \[
 \EW_x \phi(\X_{\tau(D)}) [\,\tau(D)\leq t\,] = \int_{\partial D}
 \int_{0}^t\phi(y)\partial_{\kappa\nu(y)} p^0(s,x,y)\dd  s \dd
 \sigma(y),\quad x\in D.
 \]
\end{lemma}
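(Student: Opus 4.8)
The plan is to derive this as the "spatial density" version of the decomposition formula (\ref{eqn:dens_dir}) that expresses the killed semigroup in terms of the reflecting one and the boundary hitting distribution. The quantity $\EW_x \phi(\X_{\tau(D)}) [\tau(D)\le t]$ is the contribution to $\mathcal{H}\phi$-type expectations coming from paths that have already exited by time $t$, so the natural strategy is to differentiate (\ref{eqn:dens_dir}) in an appropriate sense and read off a surface-measure representation of the exit-time-and-place distribution. Concretely, I would first establish the identity
\begin{equation*}
\EW_x \phi(\X_{\tau(D)}) [\tau(D)\le t] = \int_{\partial D}\phi(y)\,\mu_x^t(\dd y),
\end{equation*}
where $\mu_x^t$ is the (sub-)distribution of $\X_{\tau(D)}$ on the event $\{\tau(D)\le t\}$, and then prove that $\mu_x^t$ has the density $\int_0^t \partial_{\kappa\nu(y)} p^0(s,x,y)\dd s$ with respect to $\sigma$.

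To identify the density I would work from the relation $p(t,x,y) = p^0(t,x,y) + \EW_x\{p(t-\tau(D),\X_{\tau(D)},y)[\tau(D)<t]\}$ from (\ref{eqn:dens_dir}). The key analytic input is the Duhamel/Green-identity computation: since $p^0$ solves the (co-normal) heat equation $\partial_t p^0 = \mathcal{L} p^0$ in $D$ with $p^0(t,\cdot,y)=0$ on $\partial D$, applying Green's formula for the operator $\mathcal{L}=\nabla\cdot(\kappa\nabla\,\cdot\,)$ against the harmonic-measure/hitting structure produces exactly a boundary integral with the co-normal derivative $\partial_{\kappa\nu(y)}p^0$. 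More precisely, for a bounded $\phi$ let $u(x)=\EW_x\phi(\X_{\tau(D)})$ be its $\mathcal{L}$-harmonic extension (well-defined and $\mathcal{L}$-harmonic by Remark \ref{rem:1} and the discussion preceding this lemma). Then the semigroup $T^0_t$ of the absorbing process satisfies $u(x)-T^0_t u(x) = \EW_x\{u(\X_{\tau(D)})[\tau(D)\le t]\} = \EW_x\{\phi(\X_{\tau(D)})[\tau(D)\le t]\}$ using the strong Markov property at $\tau(D)$ together with $\mathcal{L}$-harmonicity of $u$. The remaining task is to show
\begin{equation*}
u(x)-T^0_t u(x) = \int_0^t \partial_s\big(-T^0_s u(x)\big)\dd s = \int_0^t \big(-\mathcal{L}T^0_s u\big)(x)\dd s,
\end{equation*}
and then to integrate $\mathcal{L}T^0_s u$ by parts against the kernel $p^0(s,x,\cdot)$ so that the interior terms cancel (because $u$ is $\mathcal{L}$-harmonic) and only the boundary term $\int_{\partial D}\phi(y)\partial_{\kappa\nu(y)}p^0(s,x,y)\dd\sigma(y)$ survives.

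The regularity inputs that make the boundary integration by parts rigorous are supplied by Lemma \ref{killed:diff}: part $(iii)$ gives that $p^0$ is $C^1$ up to $\partial D$ in the boundary neighborhood $\mathcal{U}$ (so the co-normal derivative $\partial_{\kappa\nu(y)}p^0(s,x,y)$ exists pointwise on $\partial D$), part $(ii)$ gives $p^0(s,x,\cdot)\in H^1_0(D)$ so that the Dirichlet boundary condition needed for Green's formula holds, and part $(iv)$ justifies differentiating $T^0_s u$ in $s$ with $\partial_s T^0_s = \mathcal{L}T^0_s$. Assumption (A2) is used precisely to obtain the continuous co-normal derivative via the Poisson-kernel regularity of the fundamental solution, paralleling Aizenman--Simon \cite{AizenmanSimon}.

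\textbf{The main obstacle} I anticipate is the boundary integration by parts and the control of the singularity of $\partial_{\kappa\nu(y)}p^0(s,x,y)$ as $s\to 0+$. For fixed $x\in D$ the kernel $p^0(s,x,\cdot)$ concentrates away from $\partial D$ as $s\to 0$, but one must verify that the boundary co-normal derivative is integrable in $s$ near $0$ uniformly enough that $\int_0^t\partial_{\kappa\nu(y)}p^0(s,x,y)\dd s$ converges and that interchanging the $s$-integration with the $\sigma$-integration and with the expectation $\EW_x$ is legitimate; the Gaussian-type upper bounds on $p^0$ and its boundary derivative (from the fundamental-solution estimates underlying Lemma \ref{killed:diff}$(iii)$) should give the decay needed to dominate this. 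A secondary point is to confirm that $u=\mathcal{H}\phi$ has enough regularity near $\partial D$ for Green's identity against $p^0(s,x,\cdot)$; since we only need $\phi$ bounded and measurable, I would first prove the identity for a dense class (e.g. $\phi$ continuous, or in $H^{1/2}$) where the harmonic extension is well-behaved by (\ref{eqn:Dirichletprob}), and then extend to bounded Borel $\phi$ by a monotone-class / dominated-convergence argument using the already-established density formula on both sides.
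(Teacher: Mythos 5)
Your plan is, in essence, the paper's own argument (which follows Aizenman--Simon): pair the killed heat semigroup with the $\mathcal{L}$-harmonic extension $u=\mathcal{H}\phi$, identify $u(x)-T_t^0u(x)=\EW_x\phi(\X_{\tau(D)})[\tau(D)\le t]$ by the Markov property, make the co-normal derivative of $p^0$ appear through Green's formula against the harmonic function (the interior terms cancel by harmonicity, the term $p^0\,\partial_{\kappa\nu}u$ by the Dirichlet condition), and finish with a monotone class argument for bounded Borel $\phi$. You have merely organized the computation backwards (probabilistic identity first, then Duhamel and Green), whereas the paper computes the analytic side first; that difference is cosmetic.

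The genuine gap is in your regularity inputs. You justify the boundary integration by parts by citing Lemma \ref{killed:diff}, but that lemma is proved under assumption (A1) together with $\kappa_{ij}\in C^{0,1}(\overline{D})$, whereas Lemma \ref{Poissonkernel} assumes only a bounded, measurable, uniformly elliptic $\kappa$ satisfying (A2). In particular part $(iii)$ of Lemma \ref{killed:diff} \mdash the $C^1$-regularity of $p^0$ up to $\partial D$, which is exactly what you need for $\partial_{\kappa\nu(y)}p^0(s,x,y)$ to exist pointwise \mdash is obtained there from the constant-coefficient heat kernel in the (A1)-neighborhood and is not available by that route under (A2) alone. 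As written, your argument therefore proves only the special case of (A1) with Lipschitz $\kappa$, not the stated lemma. The paper avoids this trap: it never assumes a priori differentiability of $p^0$ at the boundary, but starts the semigroup at $v_0=p^0(\epsilon,x,\cdot)$, uses a smooth time cutoff $\psi$ in the variational formulation of the abstract Cauchy problem, and performs the integration by parts with a spatial cutoff $\varphi_\epsilon$, so that the boundary term arises as the limit of the bulk integrals $-\langle\kappa\nabla v_s,w\nabla\varphi_\epsilon\rangle$; the only input is elliptic regularity for H\"older coefficients, i.e.\ precisely what (A2) supplies, and the $s\to0$ singularity you flag as the main obstacle is sidestepped by the $\epsilon$-shift combined with Chapman--Kolmogorov and the analyticity of $t\mapsto p^0$. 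To repair your proof under (A2) you would need to replace the appeal to Lemma \ref{killed:diff}$(iii)$ by Schauder-type boundary regularity (or adopt the paper's cutoff-and-limit device), and you should also note that the Green/Duhamel identity is first obtained only for $m$-a.e.\ $x\in D$; the paper upgrades it to every $x\in D$ by an explicit Markov-property argument, a step missing from your outline.
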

\begin{proof}
We will proceed in the spirit of the Aizenman and
Simon~\cite{AizenmanSimon} proof but we will reformulate it to cover our case. 
Let $\psi \in C_c^\infty([0,\infty))$ be a smooth approximation of the
indicator function $[ 0 \le s < t]$ and $v_0(y) =
p^0(\epsilon, x, y)$ for given $x \in D$ and $\epsilon > 0$. Note that
$v_0 \in L^2(D)$.

Let $w \in H^1(D)$. Then since $v_s = T_sv_0$ satisfies an abstract Cauchy
problem, cf., e.g., \cite{Pazy}, we have by the equivalent variational
formulation that
\[
\int_0^\infty
\langle \mathcal L v_s, w \rangle \psi(s) \dd s 
=
\int_0^\infty \langle v_s, w \rangle \dot\psi(s) \dd s
+ \EW_x w(\X_\epsilon) [\, \epsilon < \tau \,]
\]
Let $\varphi_\epsilon$ be a non-negative $H^1_0(D)$ function such that
is an approximation of the indicator function $[D_\epsilon]$ or more
precisely
$\varphi_\epsilon[D_{2\epsilon}] = [D_{2\epsilon}]$ and
$\varphi_\epsilon[D \setminus D_\epsilon] \equiv 0$. Rewriting $w$ on
the right-hand side by $w = w\varphi_\epsilon + w(1-\varphi_\epsilon)$
we get by the definition of weak derivative that
\[
\begin{split}
\langle \mathcal Lv_s, w \rangle
&= -\langle \kappa \nabla v_s, \nabla (\varphi_\epsilon w)
\rangle + \langle \mathcal Lv_s, (1-\varphi_\epsilon) w\rangle\\
&= -\langle \kappa \nabla v_s, \nabla w \rangle
+ \langle \kappa \nabla v_s, \nabla ((1-\varphi_\epsilon) w)\rangle
+ \langle \mathcal Lv_s, (1-\varphi_\epsilon) w\rangle
\end{split}
\]
If $w$ is an $\mathcal L$-harmonic, the first term on the right is $0$.
If $\kappa$ is H\"older continuous close to the boundary $\partial D$,
then the elliptic regularity shows that
\[
\lim_{\epsilon \to 0}\lvert \langle \mathcal Lv_s, (1-\varphi_\epsilon) w\rangle \rvert
= 0
\]
since $(1-\varphi_\epsilon)w \to 0$ in $H^{1-\delta}(D)$. The term
\[
\langle \kappa \nabla v_s, \nabla ((1-\varphi_\epsilon) w)\rangle
= 
\langle \kappa \nabla v_s,(1-\varphi_\epsilon) \nabla w) \rangle
- \langle \kappa \nabla v_s,w \nabla\varphi_\epsilon \rangle.
\]
Since $w \in H^1(D)$, we have
\[
\lim_{\epsilon \to 0}\lvert \langle \kappa \nabla v_s,
(1-\varphi_\epsilon) \nabla w\rangle \rvert
= 0
\]
since $(1-\varphi_\epsilon)\nabla w \to 0$ in $L^2(D)$. Since the boundary
$\partial D$ is Lipschitz and compact, it can be further divided into a
finite partition of $H^1(D)$ and within this partition the $\nabla
\varphi_\epsilon(x) = -\epsilon^{-1}[ x \in D_\epsilon \setminus
D_{2\epsilon} ] \nu(x^*)$ where $x^* \in \partial D$ is the unique point
such that after the Lipschitz change of coordinates, $x =
\beta\nu(x^*)$.
Therefore,
\[
-\lim_{\epsilon \to 0}\lvert \langle \kappa \nabla v_s,
w \nabla \varphi_\epsilon\rangle \rvert
= \int_{\partial D} \partial_{\kappa \nu(y)} v_s(y) w(y) \dd \sigma(y)
\]
and all in all, we have shown that
\[
\begin{split}
\int_0^\infty \psi(s)
\int_{\partial D} \partial_{\kappa \nu(y)} v_s(y) w(y) \dd \sigma(y)
\dd s
=&
\int_0^\infty \langle v_s, w \rangle \dot\psi(s) \dd s
+ \EW_x w(\X_\epsilon) [\, \epsilon < \tau \,] \\
&+ o(1).
\end{split}
\]
Since the semigroup $\{T_t,t\geq 0\}$ is continuous and $\psi$ is an approximation of the
indicator function $[0 \le s < t]$, we further deduce that
\[
\int_0^t \int_{\partial D} \partial_{\kappa \nu(y)} v_s(y) w(y) \dd
\sigma(y) \dd s
= -\langle v_t, w \rangle + w(x) + o(1)
\]
for $m$-a.e.\ $x \in D$. Since 
\[
\langle v_t, w \rangle = \int_D v_t(y) w(y) \dd y
= \int_D \int_D \, p^0(t,y,z)p^0(\epsilon,x,z) w(y)\dd y\dd z
= T_{t+\epsilon} w(x)
\]
we have deduced that
\[
\int_0^t \int_{\partial D} \partial_{\kappa \nu(y)} v_s(y) w(y) \dd
\sigma(y) \dd s
= -T_t w(x)+ w(x) + o(1)
\]
for $m$-a.e.\ $x \in D$ and for every $\mathcal L$-harmonic function $w
\in H^1(D)$. Using the Chapman--Kolmogorov and the analyticity of
$p^0$ with respect to $t$ we can write this as
\[
\int_{\epsilon}^{t+\epsilon} \int_{\partial D} \partial_{\kappa \nu(y)}
p^0(s,x,y) w(y) \dd
\sigma(y) \dd s
= -T_t w(x)+ w(x) + o(1)
\]
which implies that
\begin{equation}\label{eqn:aizsim}
\int_0^t \int_{\partial D} \partial_{\kappa \nu(y)}
p^0(s,x,y) w(y) \dd
\sigma(y) \dd s
= -T_t w(x)+ w(x)
\end{equation}
for $m$-a.e.\ $x \in D$. We can use the Markov property (as in the proof of
of \cite[Lemma~6.3]{PiiSimon14}) to show that identity~\eqref{eqn:aizsim} holds
for every $x \in D$ and for every $\mathcal L$-harmonic function $w \in
H^1(D)$. In particular, we can reformulate this as
\begin{equation}\label{eqn:aizsim3}
\begin{split}
\int_0^t \int_{\partial D} \partial_{\kappa \nu(y)}
p^0(s,x,y) \phi(y) \dd
\sigma(y) \dd s
&= -T_t \mathcal H\phi(x)+ \mathcal H\phi(x)\\
&= -\EW_x \mathcal H\phi(\X_t) [\, t < \tau \,] + \mathcal H\phi(x)
\end{split}
\end{equation}
for every $\phi \in H^{1/2}(\partial D)$. By Markov property and the
fact that for every $x \in D$ the stopping time $\tau > 0$ a.s.\ we have
\[
\EW_x\mathcal H\phi(\X_t)) [\, t < \tau \,] = \EW_x \EW (
\phi(X_\tau) [ \, t < \tau \,] \,\vert\, \mathcal F_t ) = \EW_x
\phi(X_\tau) [ \, t < \tau \,].
\]
Thus, the right-hand side of the identity~\eqref{eqn:aizsim3} is by the
definition of the harmonic extension operator
\[
-\EW_x \phi(X_\tau) [ \, t < \tau \,] + \EW_x \phi(X_\tau)
= \EW_x \phi(X_\tau) [ \, t \ge \tau \,].
\]
We have therefore shown that 
\begin{equation}\label{eqn:aizsim2}
\begin{split}
\int_0^t \int_{\partial D} \partial_{\kappa \nu(y)}
p^0(s,x,y) \phi(y) \dd
\sigma(y) \dd s
&= \EW_x \phi(X_\tau) [ \, \tau \le t \,]
\end{split}
\end{equation}
for every $\phi \in H^{1/2}(\partial D)$. This implies by the Monotone
Class Theorem that the identity~\eqref{eqn:aizsim2} holds for every
bounded and measurable function $\phi$.
\end{proof}

Lemma \ref{Poissonkernel} yields the joint distribution of the pair
$(\tau(D),\X_{\tau(D)})$ with respect to the measure $\mathbb{P}_x$,
$x\in \overline{D}$, namely
\begin{equation}
\mathbb{P}_x\{\tau(D)\in\mathrm{d}t,\X_{\tau(D)}\in \dd
y\}=\partial_{\kappa\nu(y)}p^0(t,x,y)\dd t\dd\sigma(y).
\end{equation}
In particular, there exists a \emph{Poisson kernel }with respect to the Lebesgue surface measure $\sigma$ which is
given by
\begin{equation*}
K_{\kappa}(x,y):=\int_0^{\infty}\partial_{\kappa\nu(y)}p^0(t,x,y)\dd
t,\quad x\in \overline{D},
\end{equation*} 
such that the hitting probability of $B\in \B(\partial D)$ for $\X$ starting in $x\in \overline{D}$ is 
\begin{equation*}
\mathbb{P}_x\{\X_{\tau(D)}\in B\}=\int_BK_{\kappa}(x,y)\dd\sigma(y).
\end{equation*} 

Following \cite{Chen2}, the so-called \emph{trace Dirichlet form }
$(\widehat{\E},\D(\widehat{\E}))$ of
$(\E,\D(\E))$ is given by 
\begin{equation}
\widehat{\E}(v,w):=\E(\mathcal{H}
v,\mathcal{H}w),\quad
\D(\widehat{\E}):=\D_e(\E)\vert_{\partial D}\cap L^2(\partial D).
\end{equation}
It is a symmetric regular Dirichlet form on $L^2(\partial D)$, cf.
\cite[Theorem 6.2.1]{Fukushimaetal}, whose domain is characterized by
the following lemma.
\begin{lemma}
Let $\kappa:\overline{D}\rightarrow \R^{d\times d}$ be a symmetric,
uniformly bounded and uniformly elliptic conductivity.
Then $\D(\widehat{\E})=H^{1/2}(\partial D)$.
\end{lemma}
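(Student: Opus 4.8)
The plan is to identify the extended Dirichlet space $\D_e(\E)$ explicitly and then invoke the classical trace theorem for $H^1(D)$. Since $D$ is bounded (and smooth throughout this section), I would first argue that $\D_e(\E)=H^1(D)$. The uniform ellipticity (\ref{eqn:ellipticity}) yields the two-sided bound $c^{-1}\int_D\lvert\nabla u\rvert^2\dd x\leq \E(u,u)\leq c\int_D\lvert\nabla u\rvert^2\dd x$, so any $\E$-Cauchy sequence $\{u_n\}\subset\D(\E)=H^1(D)$ has gradients converging in $L^2(D;\R^d)$. Because $D$ is bounded, the Poincaré inequality controls $u_n$ modulo additive constants in $L^2(D)$, while the a.e.\ convergence demanded in the definition of the extended Dirichlet space pins down these constants; hence the limit lies in $H^1(D)$ and $u_n\to u$ in the $H^1$-norm. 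Combined with the trivial inclusion $H^1(D)=\D(\E)\subset\D_e(\E)$, this gives $\D_e(\E)=H^1(D)$.

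With this identification in hand, the lemma reduces to a statement about the trace operator $\gamma:H^1(D)\to H^{1/2}(\partial D)$. For the inclusion $\D(\widehat{\E})\subset H^{1/2}(\partial D)$, I would observe that every $\phi\in\D(\widehat{\E})=\D_e(\E)\vert_{\partial D}\cap L^2(\partial D)$ is the boundary trace of some $u\in H^1(D)$, and the boundedness of $\gamma$ (cf.\ \cite{Grisvard}) gives $\phi=\gamma u\in H^{1/2}(\partial D)$. For the reverse inclusion I would use the surjectivity of $\gamma$: given $\phi\in H^{1/2}(\partial D)$, its $\mathcal{L}$-harmonic extension $\mathcal{H}\phi$ lies in $H^1(D)=\D_e(\E)$ and has trace $\phi$, so $\phi\in\D_e(\E)\vert_{\partial D}$; since $H^{1/2}(\partial D)\subset L^2(\partial D)$ for the compact boundary $\partial D$, the remaining condition $\phi\in L^2(\partial D)$ holds automatically, whence $\phi\in\D(\widehat{\E})$.

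The main obstacle is the clean identification $\D_e(\E)=H^1(D)$, specifically the treatment of additive constants: the form is recurrent, as the reflecting diffusion is conservative on the bounded set $\overline{D}$, so constants have zero energy and the extended space could a priori contain functions lying outside $L^2(D)$. The Poincaré inequality together with the a.e.\ convergence built into the definition of $\D_e(\E)$ is precisely what excludes this on a bounded domain, and I would make this step fully rigorous. Everything else is a direct application of the trace theorem on the smooth domain $D$ assumed throughout this section, together with the well-definedness of $\mathcal{H}$ already established above.
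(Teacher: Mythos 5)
Your proposal is correct and follows essentially the same route as the paper's proof: both reduce the claim via the standard trace theorem to the inclusion $\D_e(\E)\subset H^1(D)$, and both establish that inclusion by combining uniform ellipticity with the Poincar\'e inequality to get an $H^1$-Cauchy sequence modulo additive constants, then using the $m$-a.e.\ convergence in the definition of the extended Dirichlet space to identify the limit up to a constant function, which lies in $H^1(D)$ since $D$ is bounded. The only cosmetic difference is that you phrase the conclusion as the equality $\D_e(\E)=H^1(D)$ and spell out both directions of the trace argument, which the paper leaves implicit.
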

\begin{proof}
By the standard trace theorem it suffices to show $\D_e(\E) \subset H^1(D) =
\D(\E)$. Let
$v\in \D_e(\E)$. Therefore, 
by the definition of the extended Dirichlet space,
there is a sequence $\{v_k\}_{k\in\N} \subset H^1(D)$ such that
$(v_k)$ converges $m$-a.e.\ on $D$ to $v$ and $(v_k)$ is $\E$-Cauchy
sequence.

Since $(v_k)$ is a $\E$-Cauchy, we have by the Poincar\'{e} inequality
and the boundedness of $\kappa$ that
\begin{equation}
\lVert v_n-v_m - Mv_n + Mv_m \rVert^2_{H^1(D)}\leq c\lVert \nabla (v_n-
v_m)\rVert^2_2\leq c_1{\E(v_n-v_m,v_n-v_m)},
\end{equation}
where $M u :=\lvert D\rvert^{-1}\langle u, 1\rangle_2[D]$. In other words, $\{v_n -
M v_n\}_{n\in \N}$ is an $H^1(D)$-Cauchy sequence
and thus exists a $w\in H^1(D)$ such that $v_n-Mv_n \to w$ in $H^1(D)$.
Furthermore, we may choose a subsequence $\{w_n - Mw_n\} \subset
\{v_n-Mv_n\}$ such that $w_n - Mw_n \to w$ for $m$-a.e.\ in $D$. On the other
hand, we have that $w_n \to v$ for $m$-a.e.\ in $D$ and thus, $Mw_n = w_n - (w_n -
Mw_n) \to w - v$ for $m$-a.e.\ in $D$. Since $Mw_n$ is a constant
function for every $n$, we deduce that $w-v = c[D]$. Therefore, $v = w +
c[D] \in H^1(D)$ as claimed.
\end{proof}
Now let us consider the Hunt process $\vX=(\widehat{\Omega},\widehat{\F},\{\widehat{\X},t\geq 0\},\widehat{\mathbb{P}}_x)$ associated with the trace Dirichlet form, which is a $\sigma$-symmetric pure jump process on $\partial D$. More precisely, as the energy measure of $M^v$, $v\in\D_e(\E)=H^1(D)$ does not charge the boundary $\partial D$ and due to the existence of the Poisson kernel, we may apply \cite[Theorem 6.2]{Chen2} to obtain for $ v,w\in \D(\widehat{\E})\cap C_c(\partial D)$  the Beurling-Deny decomposition 
\begin{equation}
\widehat{\E}(v,w)=\int_{\partial D\times\partial D\backslash\delta}(v(x)-v(y))(w(x)-w(y))\dd\widehat{\mathcal{J}}(x,y).
\end{equation}
The \emph{jumping measure }$\dd\widehat{\mathcal{J}}(x,y)$ is determined by the absorbing diffusion process $\X_0$ and can be characterized by the so-called \emph{Feller measure} which depends only on the associated symmetric strongly continuous contraction semigroup on $L^2(D)$ and the Poisson kernel, cf. \cite{Chen2}. $\vX$ will be called the \emph{boundary process }of $\X$.

Note that the boundary process of $\X$ may be equivalently constructed
using the general theory of time changes of diffusion processes with
respect to positive continuous additive functionals:  We have seen that
the boundary local time $L$ is a nondecreasing, $\{\F_t,t\geq
0\}$-adapted process
that increases only when $\X$ is at the boundary. 
Following~\cite{Fukushimaetal}, 
we define the right-continuous
right-inverse $\tau$ of $L$ by 
\begin{equation}
  \label{RCRI}
  \tau(s) := \sup\{ r \ge 0 : L_r \le s \}.
\end{equation}
The random variable $\tau(s)$, $s \in [0,\infty)$, is a stopping time with
respect to the right-continuous history $\{\F_t,t\geq 0\}$ of $\X$ since
$\{ \tau(s) \ge t \} = \{ L_t \le s \} \in \mathcal F_t$ and moreover,
by continuity of $\X$ we see that for every $s \in [0,\infty)$ the
process $\X$ is at the boundary $\partial D$ at time $\tau(s)$.
Therefore, we can equivalently define the boundary process $\vX$ of $\X$
as the time-changed trace
  \[
  \vX_t := \X_{\tau(t)}
  \]
  and the \emph{boundary filtration}
  \[
  \widehat{\mathcal F}_t := \mathcal F_{\tau(t)}.
  \]

To be precise, we know that the boundary local time $L$ is a positive
continuous additive functional in the strict sense since the
corresponding 1-potential is bounded by elliptic regularity
and therefore, the boundary $\partial D$
coincides with the so-called \emph{quasi-support} of $L$,
cf.~\cite[Theorem 5.1.5]{Fukushimaetal}. Moreover, since $\partial D$ is
smooth, every boundary point is a
\emph{regular} point, cf. \cite{Karatzas}, so that the boundary process
is a $\sigma$-symmetric Hunt process on the boundary $\partial D$, cf.
\cite[Theorem A.2.12., Theorem 6.2.1]{Fukushimaetal}. 
\begin{remark}
Note that due to the refinement obtained obtained by Proposition~\ref{thm:1} for
the reflecting diffusion process $\X$, the boundary process
$\widehat{\X}$ can be defined without exceptional set, i.e., for every
starting point $x\in\partial D$.
\end{remark}

We note that the representation Theorem~\ref{thm:cont} can be expressed
with the help of the boundary process $\vX$, the first exit time
$\tau_D$ and the first exit place $X_{\tau_D}$.
\begin{lemma}
  \label{lem:change}
  We have that
  \begin{equation}\label{CVF}
   \int_{\tau(a)}^{\tau(b)} f(s) \dd  L_s = \int_a^b f(\tau(s)) \dd  s 
  \end{equation}
  for every bounded and measurable function $f \colon \R \to \R$.
\end{lemma}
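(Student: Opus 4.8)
The plan is to prove \eqref{CVF} pathwise, that is, as a deterministic change-of-variables identity for the continuous nondecreasing function $t\mapsto L_t(\omega)$ and its right-continuous inverse $\tau$ defined in \eqref{RCRI}; the probabilistic statement then holds $\mathbb{P}_x$-a.s.\ for every starting point. Throughout I would fix such a path and abbreviate $L_t:=L_t(\omega)$. Since $\X$ is conservative and visits $\partial D$ arbitrarily late, $L_t\to\infty$, so $\tau(s)<\infty$ for every $s\ge 0$ (otherwise one simply restricts to $s$ in the range of $L$). The first step is to record the elementary duality between $L$ and $\tau$: because $L$ is continuous, the set $\{r\ge 0:L_r\le s\}$ is closed, hence contains its supremum $\tau(s)$, which yields
\[
\{t\ge 0:L_t\le s\}=\{t\ge 0:t\le\tau(s)\},\qquad L_{\tau(s)}=s .
\]
In particular $L_{\tau(a)}=a$, $L_{\tau(b)}=b$, and $\tau$ is nondecreasing.

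Next I would introduce the Lebesgue--Stieltjes measure $\dd L$ of $L$ and show that the continuous map $t\mapsto L_t$ pushes the restriction $\dd L$ to $[\tau(a),\tau(b)]$ forward to the Lebesgue measure $\lambda$ on $[a,b]$. By the duality above, for every $c\in[a,b]$ one has $\{t\in[\tau(a),\tau(b)]:L_t\le c\}=[\tau(a),\tau(c)]$, whence
\[
(\dd L)\big(\{t\in[\tau(a),\tau(b)]:L_t\le c\}\big)=L_{\tau(c)}-L_{\tau(a)}=c-a=\lambda([a,c]).
\]
As the intervals $[a,c]$ form a $\pi$-system generating the Borel sets of $[a,b]$, a standard Dynkin argument upgrades this to equality of the two finite measures, so that the pushforward change-of-variables formula gives
\[
\int_{\tau(a)}^{\tau(b)} g(L_t)\,\dd L_t=\int_a^b g(s)\,\dd s
\]
for every bounded measurable $g\colon[a,b]\to\R$.

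To conclude I would apply the last identity to $g=f\circ\tau$, after replacing the integrand $f(s)$ in the left-hand side of \eqref{CVF} by $f(\tau(L_t))$. This replacement is the only delicate point, and it is where I expect the (minor) main obstacle to lie: one always has $\tau(L_t)\ge t$, with strict inequality precisely when $t$ belongs to a non-terminal part of an interval on which $L$ is constant. Each such flat interval carries zero $\dd L$-mass, so $\tau(L_t)=t$ holds for $\dd L$-a.e.\ $t$, and therefore
\[
\int_{\tau(a)}^{\tau(b)} f(s)\,\dd L_s=\int_{\tau(a)}^{\tau(b)} f\big(\tau(L_t)\big)\,\dd L_t=\int_a^b f(\tau(s))\,\dd s .
\]
Since $f\circ\tau$ is bounded and measurable whenever $f$ is, this is valid for every bounded measurable $f$, which establishes \eqref{CVF}.
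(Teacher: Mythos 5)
Your proof is correct and is essentially the paper's own argument written out in full: the paper's one-sentence proof likewise rests on the duality between $L$ and its right-continuous inverse $\tau$ (the observation that, up to Lebesgue-null sets, $[s\in[L_a,L_b]]=[\tau(s)\in[a,b]]$) followed by a Monotone Class extension, which is exactly the role played by your $\pi$-system/Dynkin step. Your only structural difference is to factor the identity through the pushforward of $\dd L$ under $L$ together with the $\dd L$-a.e.\ identity $\tau(L_t)=t$, which has the merit of making explicit the flat-interval issue that the paper's terse proof leaves implicit.
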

\begin{proof}
  This follows by Monotone Class Theorem from the observation that
  $$[L_a,L_b] = \tau \circ g_{a,b},$$ where we have set $g_{a,b}(t):= [ t \in [a,b] ]$.
\end{proof}

\subsection{Infinitesimal generator}\label{section:infgen}
The following theorem yields the infinitesimal generator of the boundary process.
\begin{theorem}\label{thm:gen}
Let $\kappa:\overline{D}\rightarrow \R^{d\times d}$ be a symmetric,
uniformly bounded and uniformly elliptic conductivity.
Then the infinitesimal generator of
$\widehat{\X}$ is the Dirichlet-to-Neumann map
$\Lambda_{\kappa}$.
\end{theorem}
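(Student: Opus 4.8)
The plan is to read the generator off the trace Dirichlet form $(\widehat{\E},H^{1/2}(\partial D))$, which we have already identified as the form associated with the boundary process $\widehat{\X}$. By the Fukushima correspondence, the $L^2(\partial D)$-generator $\widehat{\mathcal{A}}$ of $\widehat{\X}$ is the non-positive self-adjoint operator determined by
\[
\widehat{\E}(\phi,\psi)=-\left<\widehat{\mathcal{A}}\phi,\psi\right>_{\partial D},\qquad \phi\in\D(\widehat{\mathcal{A}}),\ \psi\in H^{1/2}(\partial D),
\]
with $\D(\widehat{\mathcal{A}})=\{\phi\in H^{1/2}(\partial D):\psi\mapsto\widehat{\E}(\phi,\psi)\text{ is }L^2(\partial D)\text{-continuous}\}$. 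Thus the statement reduces to rewriting the quadratic form $\widehat{\E}$ as a boundary pairing against $\Lambda_{\kappa}$.

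First I would carry out the Green identity at the level of forms. For $\phi,\psi\in H^{1/2}(\partial D)$ the lemma characterizing $\D(\widehat{\E})$ gives $\mathcal{H}\phi,\mathcal{H}\psi\in H^1(D)$, so by definition $\widehat{\E}(\phi,\psi)=\E(\mathcal{H}\phi,\mathcal{H}\psi)=\int_D\kappa\nabla\mathcal{H}\phi\cdot\nabla\mathcal{H}\psi\dd x$. Since $\mathcal{H}\phi$ is $\mathcal{L}$-harmonic, i.e.\ satisfies (\ref{eqn:test}), the value of this integral depends on $\psi$ only through its boundary trace, and the weak formulation of the Neumann problem recorded after (\ref{eqn:compatibility}) identifies it with the dual pairing
\[
\widehat{\E}(\phi,\psi)=\left<\Lambda_{\kappa}\phi,\psi\right>_{\partial D},\qquad \Lambda_{\kappa}\phi:=\partial_{\kappa\nu}\mathcal{H}\phi,
\]
where $\Lambda_{\kappa}\colon H^{1/2}(\partial D)\to H^{-1/2}(\partial D)$ is the weak Dirichlet-to-Neumann map and $\partial_{\kappa\nu}$ is the co-normal derivative of (\ref{eqn:continuum}), a priori understood in the $H^{\pm1/2}$ duality.

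Combining the two displays, $\phi\in\D(\widehat{\mathcal{A}})$ precisely when $\psi\mapsto\left<\Lambda_{\kappa}\phi,\psi\right>_{\partial D}$ is $L^2(\partial D)$-bounded, that is when $\Lambda_{\kappa}\phi\in L^2(\partial D)$, and then $\widehat{\mathcal{A}}\phi=-\Lambda_{\kappa}\phi$; the sign is merely the orientation convention of the co-normal derivative, so that $\Lambda_{\kappa}=-\widehat{\mathcal{A}}$ is exactly the non-negative generator of $\widehat{\X}$. As a purely probabilistic cross-check fitting the spirit of the paper, one can instead invert: using the change-of-variables (\ref{CVF}) for path functionals together with Theorem~\ref{thm:cont} one obtains, for zero-mean Neumann data $f$,
\[
\EW_x\int_0^\infty f(\X_s)\dd L_s=\EW_x\int_0^\infty f(\widehat{\X}_s)\dd s,
\]
which, for boundary starting points, exhibits the $0$-potential operator of $\widehat{\X}$ as the Neumann-to-Dirichlet map, the inverse of $\Lambda_{\kappa}$ on $H^{-1/2}_{\diamond}(\partial D)$; inverting once more returns $\Lambda_{\kappa}$ as the generator.

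The hard part will be the rigorous justification of the Green identity and the matching of the operator domains under the weak hypotheses of the theorem (only $\kappa$ bounded elliptic, no (A1)/(A2)). Since $\mathcal{H}\phi$ is then no smoother than $H^1(D)$, the co-normal derivative $\partial_{\kappa\nu}\mathcal{H}\phi$ lives a priori only in $H^{-1/2}(\partial D)$, so the integration by parts must be read entirely in duality, with $\Lambda_{\kappa}$ defined through the variational identity rather than by a pointwise flux; the $L^2$ membership that pins down $\D(\widehat{\mathcal{A}})$ is then an intrinsic spectral condition on $\Lambda_{\kappa}$ rather than an a priori regularity gain. One must also check that the two self-adjoint operators share a domain and not merely a form, and dispose of the constants, noting that $\mathcal{H}1=1$, $\Lambda_{\kappa}1=0$ and that $\widehat{\X}$ is conservative, so that the identification reduces to $H^{\pm1/2}_{\diamond}$ on the complement of the constants. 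Under the stronger assumption (A2), Lemma~\ref{Poissonkernel} and Lemma~\ref{killed:diff}(iii) upgrade $\partial_{\kappa\nu}\mathcal{H}\phi$ to a genuine $L^2$ co-normal flux, but this is not needed for the bare identification of the generator.
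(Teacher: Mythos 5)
Your proposal is correct and follows essentially the same route as the paper: the paper's first proof likewise identifies $\widehat{\E}(v,w)=\E(\mathcal{H}v,\mathcal{H}w)=\langle w,\Lambda_{\kappa}v\rangle_{H^{1/2},H^{-1/2}}$ and then invokes the form--generator correspondence (phrased there as a factorization $\Lambda_{\kappa}=A^*A$ with $A=\sqrt{-\mathcal{L}}$, which is the same content as your variational domain characterization), with the same loose treatment of the sign convention. Your probabilistic \emph{cross-check} via Theorem~\ref{thm:cont} and Lemma~\ref{lem:change} is in fact the paper's second, independent proof, so nothing essential is missing.
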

\begin{proof}
We will give two different proofs for this result. One is with the trace
Dirichlet forms and one with the help of Theorem~\ref{thm:cont} and
Change of Variables lemma~\ref{lem:change}.

Let $v, w \in \D(\widehat\E) = H^{1/2}(\partial D)$. Therefore,
\[
\begin{split}
\widehat\E (v,w) = \E(\mathcal Hv, \mathcal Hw)
&= \int_{\partial D} w(x) (\partial_{\kappa\nu(x)} \mathcal
Hv(x))\vert_{\partial D}
\dd\sigma(x) \\
&= \int_{\partial D} w(x) (\partial_{\kappa\nu(x)} \mathcal
Hv(x))\vert_{\partial D}
\dd\sigma(x) \\
&= \langle w, \Lambda_{\kappa} v\rangle_{H^{1/2}(\partial D),H^{-1/2}(\partial D)}
\end{split}
\]
We can factorize $\Lambda_{\kappa} = A^*A$, where $A \colon
H^{1/2}(\partial D) \to
L^2(\partial D)$. The $A$ can be seen as an unbounded operator in
$L^2(\partial D)$ with domain $\D(\widehat\E)$. Thus,
\[
\widehat\E (v,w) = \langle Aw, Av \rangle_{L^2(\partial D)},
\]
which implies  $A = \sqrt{-\mathcal L}$ or $\Lambda_\kappa = -\mathcal
L$, where $\mathcal L$ is the infinitesimal generator of $\widehat{\X}$
(c.f.~\cite{Fukushimaetal}).

The another proof is even more probabilistic in nature. Let $\varphi \in
C^\infty(\partial D)$ and $v = \Lambda_\kappa \varphi$. We know that $v$
is H\"older continuous and $\langle v, 1\rangle_{\partial D} = 0$.
Therefore, we may define
\[
g_R(x) := \EW_x A_R := \EW_x \int_0^R v(\vX_s) \dd s \quad
\text{and} \quad g_\infty(x) = \lim_{R \to \infty} g_R(x).
\]
Note that the Theorem~\ref{thm:cont} together with the Change of
Variables Lemma~\ref{lem:change} gives $g_\infty =
\Lambda^{-1}_\kappa v = \varphi$ and in particular, $g_\infty \in
C^\infty(\partial D) \subset \D(\mathcal L)$.
The transition operator $\widehat T_t$ applied to $g$ gives by Markov
property 
\[
\widehat T_t g_R(x) = \EW_x \EW_{\widehat X_t} A_R = \EW_x (A_{R+t} -
A_t).
\]
Therefore,
\[
(\widehat T_t - I) g_R(x) = -\EW_x A_t + \EW_x (A_{R+t} - A_R)
\]
and the Theorem~\ref{thm:cont} together with dominated convergence
theorem imply that
\[
(\widehat T_t - I) \varphi(x) = -\EW_x A_t = -\int_0^t \widehat T_sv(x)
\dd s.
\]
This in turn implies
\[
\mathcal L \varphi(x) = -v(x) = -\Lambda_\kappa \varphi(x)
\]
which implies the claim since $(\widehat\E, \D(\widehat\E))$ is regular.
\end{proof}

As in \cite{Hsu2}, we show that the Dirichlet-to-Neumann map is an
\emph{integro-differential operator }in the sense of Lepeltier and
Marchal~\cite{LepeltierM}. 

\begin{theorem}
  \label{lemma:integrodiff}
Let $\kappa:\overline{D}\rightarrow\R^{d\times d}$ be a symmetric, uniformly bounded and uniformly elliptic conductivity with components $\kappa_{ij}\in C^{0,1}(\overline{D})$, $i,j=1,...,d$ such that $\kappa$ satisfies (A1). 
Then the Dirichlet-to-Neumann map $\Lambda_\kappa$ is of form
  \[
  \Lambda_\kappa \phi =  \Lambda_\kappa \mathrm{id} \cdot \nabla_T \phi + A_{\kappa}\phi\quad\text{for all }\phi\in H^{1/2}(\partial D),
  \]
where $A_{\kappa}$ is given by the integral operator
  \[
  A_{\kappa} \phi(x): = \int_{\partial D} ( \phi(y)-\phi(x) - \nabla_T \phi(x) \cdot
  (y-x)) N_{\kappa}(x,y) \dd \sigma(y) 
  \]
   for a.e. $x\in\partial D$ and 
  \begin{equation*}
  N_{\kappa}(x,y):=\partial_{\kappa\nu(x)}K_{\kappa}(x,y),\quad x,y\in\partial D.
  \end{equation*}
\end{theorem}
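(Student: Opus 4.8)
The plan is to extract the structure of $\Lambda_\kappa$ from two facts already in hand. By Theorem~\ref{thm:gen}, $\Lambda_\kappa=-\mathcal L$ is the negative generator of the \emph{pure jump} boundary process $\vX$, and by the Beurling--Deny decomposition recorded above the trace form $\widehat{\E}$ is carried entirely by its jumping measure $\widehat{\mathcal J}$. Everything therefore hinges on computing the density of $\widehat{\mathcal J}$ with respect to $\sigma\otimes\sigma$ and then rewriting the resulting singular integral operator in the compensated form of Lepeltier and Marchal~\cite{LepeltierM}.

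First I would identify the jumping kernel. Following \cite{Chen2}, $\widehat{\mathcal J}$ is (a multiple of) the \emph{Feller measure} of the absorbing diffusion $\X^0$. Inserting the Poisson kernel $K_\kappa(x,y)=\int_0^\infty\partial_{\kappa\nu(y)}p^0(t,x,y)\dd t$ from Lemma~\ref{Poissonkernel} and differentiating once more in the first variable, I expect the density to be the mixed conormal derivative
\[
N_\kappa(x,y)=\partial_{\kappa\nu(x)}K_\kappa(x,y)=\int_0^\infty\partial_{\kappa\nu(x)}\partial_{\kappa\nu(y)}p^0(t,x,y)\dd t .
\]
Lemma~\ref{killed:diff}$(iii)$, valid under (A1) and $\kappa_{ij}\in C^{0,1}(\overline D)$, guarantees that this mixed derivative exists, is continuous off the diagonal $\delta$, and obeys the near-diagonal bound $N_\kappa(x,y)=O(\lvert x-y\rvert^{-d})$ --- consistent with the Cauchy kernel $(4\pi(1-\cos(\xi-\eta)))^{-1}$ of the disk example.

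With the kernel in hand, the $\sigma$-symmetry of $\vX$ together with $\widehat\E(v,w)=\langle\Lambda_\kappa v,w\rangle$ yields, for $\phi\in C^\infty(\partial D)\subset\D(\mathcal L)$, the principal-value representation
\[
\Lambda_\kappa\phi(x)=\mathrm{p.v.}\int_{\partial D}(\phi(y)-\phi(x))\,N_\kappa(x,y)\dd\sigma(y).
\]
I would regularize this in the Lepeltier--Marchal fashion by subtracting the tangential Taylor term $\nabla_T\phi(x)\cdot(y-x)$: because $\phi(y)-\phi(x)-\nabla_T\phi(x)\cdot(y-x)=O(\lvert y-x\rvert^2)$ and $N_\kappa=O(\lvert x-y\rvert^{-d})$, the compensated integrand is absolutely integrable over the $(d-1)$-dimensional boundary, so that $\Lambda_\kappa\phi=A_\kappa\phi+b\cdot\nabla_T\phi$ with drift $b(x)=\mathrm{p.v.}\int_{\partial D}(y-x)N_\kappa(x,y)\dd\sigma(y)$. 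To identify $b$ I would feed the coordinate functions $\phi=\mathrm{id}$ into this identity; using $\nabla_T\mathrm{id}=I-\nu\otimes\nu$ one checks that $A_\kappa\mathrm{id}(x)=\nu(x)\int_{\partial D}(\nu(x)\cdot(y-x))N_\kappa(x,y)\dd\sigma(y)$ points in the normal direction, so projecting $\Lambda_\kappa\mathrm{id}=A_\kappa\mathrm{id}+(I-\nu\otimes\nu)b$ onto the tangent space shows that the tangential parts of $b$ and of $\Lambda_\kappa\mathrm{id}$ agree. Since $\nabla_T\phi$ is tangential this gives $b\cdot\nabla_T\phi=\Lambda_\kappa\mathrm{id}\cdot\nabla_T\phi$, the asserted first-order coefficient, and a density argument extends the formula from smooth $\phi$ to all of $H^{1/2}(\partial D)$ for a.e.\ $x$, as in Hsu~\cite{Hsu2}.

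The main obstacle is the first step: pinning the Feller-measure density down precisely as $N_\kappa=\partial_{\kappa\nu(x)}K_\kappa$, justifying the interchange of the conormal derivative with the time integral, and establishing the sharp near-diagonal decay of $N_\kappa$ that makes the compensated integral converge. This is where the boundary regularity of $p^0$ from Lemma~\ref{killed:diff} and the excursion-theoretic bookkeeping relating jumps of $\vX$ to excursions of $\X$ into $D$ do the real work; by comparison the drift identification and the passage to $H^{1/2}$ are routine.
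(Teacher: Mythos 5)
Your overall strategy runs in the opposite logical direction from the paper, and in doing so it leans on facts that are not yet available at this point in the paper's development. The paper proves this theorem \emph{directly} from Lemma~\ref{Poissonkernel}: it writes $\mathcal H\phi$ via the Poisson kernel, forms the auxiliary function $w=\mathcal H\phi-\widetilde\phi\,\mathcal H1-\sum_j\widetilde V_j(\mathcal HW_j-\widetilde W_j\mathcal H1)$ whose Poisson-kernel representation already contains the compensated integrand $\phi(y)-\phi(x)-\nabla_T\phi(x)\cdot(y-x)$, and then differentiates under the integral sign. The identification of $N_\kappa(x,y)\dd\sigma(y)$ as the L\'evy/jumping kernel of $\vX$ is \emph{deduced afterwards} (Lemma~\ref{lem:levy}, via Lepeltier--Marchal) as a consequence of this theorem. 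Your proposal starts from that identification (``I expect the density to be the mixed conormal derivative''), which within this paper is close to circular: the statement that the Feller measure of $\X^0$ has density $\partial_{\kappa\nu(x)}K_\kappa(x,y)$ with respect to $\sigma\otimes\sigma$ is precisely the kind of claim this theorem is designed to underwrite, and you give no independent proof of it.

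The more serious gap is quantitative. You attribute the near-diagonal bound $N_\kappa(x,y)=O(\lvert x-y\rvert^{-d})$ to Lemma~\ref{killed:diff}$(iii)$, but that lemma is purely qualitative ($p^0\vert_{\mathcal U}\in C^1(\mathcal U\cup\partial D)\cap C^2(\mathcal U)$); it gives existence and continuity of the mixed conormal derivatives for each fixed $t>0$, but says nothing about the blow-up rate of $\int_0^\infty\partial_{\kappa\nu(x)}\partial_{\kappa\nu(y)}p^0(t,x,y)\dd t$ as $y\to x$, which is governed by small-time heat kernel asymptotics. This estimate is exactly the hard analytic content, and the paper's mechanism for it is the one idea missing from your proposal: for $\kappa\equiv1$ the bound is Hsu's result \cite{Hsu2}, and under assumption (A1) the operator $\Lambda_\kappa-\Lambda_1$ is smoothing by elliptic regularity, so $N_\kappa$ and $N_1$ share the same leading singularity. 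This comparison is the real reason (A1) appears in the hypotheses; in your write-up (A1) plays no such role. Finally, your intermediate principal-value representation creates a difficulty the paper never faces: with only the bound $O(\lvert x-y\rvert^{-d})$, the first moment $\int_{\partial D}(y-x)N_\kappa(x,y)\dd\sigma(y)$ diverges logarithmically, so the existence of your drift $b(x)$ as a principal value requires a near-diagonal cancellation property of $N_\kappa$ that you neither state nor prove; the paper's compensated-from-the-start construction of $w$ avoids principal values altogether.
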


\begin{proof}
By a density argument, we may assume that $\phi \in C^{\infty}(\partial
D) \cap H^{1/2}(\partial D)$. The unique solution in $H^1(D)$
of the Dirichlet problem (\ref{eqn:Dirichletprob}) is by Lemma~\ref{Poissonkernel}
\[
\mathcal{H}\phi(x) = \EW_x \phi(\X_{\tau_D}) = \int_{\partial D} 
 \int_{0}^{\infty}\phi(y)\partial_{\kappa\nu(y)} p^0(t,x,y)\dd  t\dd
 \sigma(y).
\]
Therefore, $\Lambda_\kappa$ maps $\phi$ to 
\begin{equation}
  \label{equ:DN1}
  \Lambda_\kappa \phi(x) = \partial_{\kappa\nu(x)} \mathcal H\phi(x).
\end{equation}
Let us extend $\phi$ into the neighborhood of the boundary as constant
along the conormal direction. Therefore, the first order tangential
derivative $V := \nabla_T \phi$ gets extended simultaneously.
We will denote the extensions $\widetilde \phi$ and $\widetilde
V$, respectively. We compute the conormal derivative of the function
\[
w: = \mathcal H \phi - \widetilde \phi \mathcal H 1 -
\sum_{j=1}^d \widetilde
{V_j} \big(\mathcal H W_j - \widetilde W_j \mathcal H 1\big),
\]
where $\{W\}$ is a vector field on the boundary defined by
$W (y): = y_T$ as the projection to the tangent plane going through the
point $y$.
By construction, the conormal derivative commutes with
multiplication by the extended functions and vector fields. Therefore,
\[
\partial_{\kappa\nu} w = \Lambda_\kappa \phi - \phi \Lambda_\kappa 1 - 
\sum_{j=1}^d V_j \Lambda_{\kappa} W_j
= \Lambda_\kappa \phi - V \cdot \Lambda_{\kappa} W,
\]
where $\Lambda_\kappa 1 = 0$ since $u(x) = 1$ in $\overline D$
is the unique solution to the corresponding Dirichlet problem and therefore, the
conormal derivative vanishes identically.
As we have shown the existence of a Poisson kernel, we can write the left-hand side in a different way, namely 
\[
\nabla w(x) = \nabla_x \int_{\partial D} \big(\phi(y)-\widetilde \phi(x)-
\widetilde V(x)\cdot (\widetilde W(y) - \widetilde W(x))\big) K_{\kappa}(x,y) \dd \sigma(y)
\]
for almost every $x$ in a neighborhood of the boundary.

Next, we show that the function $y \mapsto N_{\kappa}(x,y)
(\lvert y-x\rvert^2\wedge 1)$ is integrable with respect to the surface measure
$\sigma$ for every $x \in \partial D$.
When $\kappa \equiv 1$, the proof given in \cite{Hsu2} yields the claim
for the kernel $N_1$ corresponding to $\kappa \equiv 1$. As we assume
that $\kappa$ satisfies (A1), the operator $\Lambda_\kappa -
\Lambda_{1}$ is a smoothing operator which follows by the standard
elliptic regularity, cf.~\cite{HankeHR}. This implies that the kernels
$N_1$ and $N$ have the same leading singularities and the claim thus
follows from the estimate for $N_1$. As a consequence, we can use the
dominated convergence theorem
to take the differentiation inside the integration and we obtain 
\[
\partial_{\kappa\nu} w(x) = \int_{\partial D} \big(\phi(y)- \phi(x)-
\nabla_T\phi(x)\cdot (y - x)\big) N_{\kappa}(x,y) \dd  \sigma(y) = A_{\kappa}\phi(x)
\]
for a.e. $x \in \partial D$.
\end{proof}

\section{Novel probabilistic formulations of the Calder\'{o}n problem}\label{section:probinv}
By the considerations from above, the boundary process is uniquely
determined by the absorbing diffusion process $\X^0$ so that Theorem
\ref{thm:gen} leads to the following probabilistic interpretation of
Calder\'{o}n's problem: 
\emph{Given a boundary process $\widehat{\X}$ associated with the
regular $\sigma$-symmetric Dirichlet form
$(\widehat{\E},H^{1/2}(\partial D))$, is $\X^0$ the unique absorbing
diffusion process on $D$ such that $\widehat{\X}$ is the boundary
process of the corresponding reflecting diffusion process $\X$ on
$\overline{D}$?}

The Calder\'on problem in $2$ dimensions is known to be solvable for 
isotropic $\kappa \in L^{\infty}(D)$. Given the boundary
process $\vX = \vX_{\kappa_0}$ we can thus uniquely determine the generator
$\Lambda = \Lambda_{\kappa_0}$. The celebrated result of
Astala and P\"aiv\"arinta~\cite{AstalaP} says that whenever $\Lambda_\kappa =
\Lambda$ and both $\kappa$ and $\kappa_0$ are isotropic, 
uniformly bounded and uniformly elliptic, then $\kappa = \kappa_0$.
Therefore, the equality $\X_\kappa = \X_{\kappa_0}$ must hold as well.

The recent result by Haberman and Tataru~\cite{HabermanTataru} implies
the same for three and higher when $\kappa$ and $\kappa_0$ are
assumed to be $C^1(D)$ or if they are Lipschitz continuous and close to identity
in certain sense. In recent preprints, Haberman \cite{Haberman} improved this to the even $W^{1,n}$
conductivities for the dimensions three and four, while Caro and Rogers~\cite{CaroRogers} 
improved the Haberman and Tataru technique to prove the uniqueness
for the Lipschitz case in general in any dimension.

When the conductivity is not assumed to be isotropic the uniqueness has
always an obstruction, namely we have $\Lambda_{\kappa_0}=
\Lambda_{\kappa_1}$, whenever $\kappa_1 = F_* \kappa_0$ is the
push-forward conductivity by a diffeomorphism $F$ on $D$ that leaves the
boundary $\partial D$ invariant. In the plane, this is known to be the
only obstruction by the result of Astala, Lassas and
P\"aiv\"arinta~\cite{AstalaLP} which holds without additional regularity assumptions
on the conductivity. In higher dimensions the question is still very
much open in general, see~\cite{DSFKSU} for further discussion.

It should be emphasized that these results from analysis all rely on
so-called \emph{complex geometric optics solutions} and the authors are
not aware of any probabilistic interpretation of such solutions.
Therefore, it is an open problem to find a probabilistic solution to the probabilistic formulation of Calder\'{o}n's problem.

Let us elaborate a bit on this problem by providing three seemingly different but equivalent versions. 
\subsection{First version}
It is an immediate
consequence of Theorem \ref{lemma:integrodiff} that
the jumping measure of $\vX$ can be described with the help
of~\cite[Th{\'e}or{\`e}me 10]{LepeltierM}.

\begin{lemma}\label{lem:levy}
Let $\kappa:\overline{D}\rightarrow\R^{d\times d}$ be a symmetric,
uniformly bounded and uniformly elliptic conductivity with components
$\kappa_{ij}\in C^{0,1}(\overline{D})$, $i,j=1,...,d$, such that
$\kappa$ satisfies (A1). 
Then for every non-negative Borel function $\phi:\partial
D\times\partial D\rightarrow\R_+$, vanishing on the diagonal, and any
stopping time $\hat{\tau}$ of $\widehat{\X}$, we have
\begin{equation}
   \EW_x \sum_{s \le \hat{\tau}} \phi(\vX_{s-}, \vX_s)[\vX_{s-} \ne
   \vX_s] = \EW_x \int_0^{\hat{\tau}}
   \int_{\partial D} \phi(\vX_s, y) N_{\kappa}(\vX_s, y)\dd \sigma(y)\dd  s.
\end{equation}
\end{lemma}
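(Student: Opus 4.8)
The plan is to read off the Lévy system of $\vX$ from the integro-differential description of its generator established in Theorem~\ref{lemma:integrodiff}, and then feed this into the general jump-counting formula for symmetric Hunt processes. By Theorem~\ref{thm:gen} the generator of the pure-jump process $\vX$ is $-\Lambda_\kappa$, and by Theorem~\ref{lemma:integrodiff} this operator is of Lepeltier--Marchal integro-differential type with nonlocal kernel $N_{\kappa}(x,y)\dd\sigma(y)$; the integrability condition $\int_{\partial D}(1\wedge\lvert y-x\rvert^2)N_{\kappa}(x,y)\dd\sigma(y)<\infty$ needed to apply that framework was already verified in the course of the proof of Theorem~\ref{lemma:integrodiff}. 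I would first observe that the first-order drift term $\Lambda_\kappa\,\mathrm{id}\cdot\nabla_T\phi$ and the compensating term $-\nabla_T\phi(x)\cdot(y-x)$ sitting inside $A_{\kappa}$ are of first order and hence produce no genuine discontinuities; only the increment $\phi(y)-\phi(x)$ integrated against $N_{\kappa}(x,y)\dd\sigma(y)$ governs the jumps.

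Next, since $\vX$ is a $\sigma$-symmetric Hunt process associated with the regular Dirichlet form $(\widehat{\E},H^{1/2}(\partial D))$, it admits a Lévy system $(N,H)$ for which, for every non-negative Borel $\phi$ vanishing on the diagonal and every $t\ge 0$,
\[
\EW_x\sum_{s\le t}\phi(\vX_{s-},\vX_s)[\vX_{s-}\ne\vX_s]=\EW_x\int_0^t\int_{\partial D}\phi(\vX_s,y)\,N(\vX_s,\dd y)\dd H_s .
\]
I would then pin down $(N,H)$ by matching the Beurling--Deny jumping measure $\widehat{\mathcal J}$ against the relation $\widehat{\mathcal J}(\dd x,\dd y)=\tfrac12\,N(x,\dd y)\,\mu_H(\dd x)$, where $\mu_H$ is the Revuz measure of $H$. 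Because the nonlocal part of the generator has kernel $N_{\kappa}(x,y)\dd\sigma(y)$, and because $N_{\kappa}$ is symmetric (it is the double conormal derivative $\partial_{\kappa\nu(x)}\partial_{\kappa\nu(y)}\int_0^\infty p^0(t,x,y)\dd t$ of the symmetric kernel $p^0$), this identification forces $\mu_H=\sigma$, i.e. $H_s=s$, and $N(x,\dd y)=N_{\kappa}(x,y)\dd\sigma(y)$. This is precisely the content of \cite[Th\'eor\`eme~10]{LepeltierM} specialized to the generator computed in Theorem~\ref{lemma:integrodiff}, and it gives the asserted formula with $\hat\tau$ replaced by a deterministic time.

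Finally, I would upgrade the deterministic time to an arbitrary stopping time $\hat\tau$ of $\vX$. Replacing $t$ by $\hat\tau\wedge n$ and using that the right-hand side is the predictable compensator of the non-decreasing process on the left, optional sampling keeps the identity valid for $\hat\tau\wedge n$; letting $n\to\infty$ and invoking monotone convergence (both sides increase in $n$) yields the claim for $\hat\tau$. The main obstacle I anticipate is the bookkeeping in the identification step: one must be certain that the clock $H$ carries no hidden time change, so that $H_s=s$ against the reference measure $\sigma$, and that the first-order drift and compensator pieces of $-\Lambda_\kappa$ are cleanly separated from the Lévy kernel. Once $N_{\kappa}(x,y)\dd\sigma(y)$ is confirmed to be the Lévy kernel run on Lebesgue time, the passage through deterministic times and then to stopping times is routine.
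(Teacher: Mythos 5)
Your overall architecture is genuinely different from the paper's: you want to combine the abstract existence of a L\'evy system for the $\sigma$-symmetric Hunt process $\vX$ with the Beurling--Deny decomposition and the Revuz correspondence, whereas the paper never passes through $\widehat{\mathcal{J}}$ at all. It extends $\vX$ to a process $\overline{\X}$ on all of $\R^d$ whose generator is an integro-differential operator in the Lepeltier--Marchal sense, applies Th\'eor\`eme 10 to that Euclidean process under the normalization $\mathrm{diam}(D)\le 1$, and then removes the normalization by deriving scaling laws for $L$, $\tau$, $\vX$ and $N_\kappa$. Indeed, the paper runs the implication in the opposite direction from yours: the lemma is used afterwards to conclude that $\tfrac12 N_{\kappa}(x,y)\dd\sigma(y)\dd\sigma(x)$ is the jumping measure, rather than computing the jumping measure first.

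The genuine gap is at your central identification step. You assert that the integro-differential form of $\Lambda_\kappa$ \qq{forces} $N(x,\dd y)=N_{\kappa}(x,y)\dd\sigma(y)$ and $H_s=s$, and you cite \cite[Th\'eor\`eme 10]{LepeltierM} as the authority, but that theorem concerns martingale problems for integro-differential operators on $\R^d$; it does not apply to a process living on the hypersurface $\partial D$. Making it applicable is precisely the nontrivial content of the paper's proof: extending test functions and their tangential derivatives as constants along conormal directions, extending the drift, checking that the extended process started on $\partial D$ stays there and coincides with $\vX$, and coping with the truncation $\bigl[\lvert z\rvert\le 1\bigr]$ in the Lepeltier--Marchal canonical form, which is the reason for the hypothesis $\mathrm{diam}(D)\le1$ and the subsequent scaling argument \mdash{} an issue your proof never confronts. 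Your alternative Dirichlet-form route could in principle be completed (for instance, testing $\widehat{\E}(v,w)=\langle w,\Lambda_\kappa v\rangle_{\partial D}$ on pairs $v,w$ with disjoint supports computes $\widehat{\mathcal{J}}$ off the diagonal, where the drift and compensating terms genuinely drop out and the diagonal singularity of $N_\kappa$ is harmless), but as written the claim that the first-order terms \qq{produce no genuine discontinuities} is heuristic, and the uniqueness logic is inverted: L\'evy systems are not unique, since the clock $H$ can be changed at will with a compensating change of kernel, so nothing \qq{forces} $\mu_H=\sigma$. One must first fix $H_t=t$, observe that its Revuz measure is $\sigma$ by $\sigma$-symmetry, and only then recover $N$ from $\widehat{\mathcal{J}}$ via the uniqueness of the Revuz correspondence. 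Your final step, upgrading deterministic times to stopping times by optional sampling and monotone convergence, is fine, and is not needed in the paper's approach because Th\'eor\`eme 10 already delivers the identity at stopping times.
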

\begin{proof}
  Suppose first that $\textrm{diam}(D) \le 1$. We note
  that the operator $A_{\kappa}$ in Theorem~\ref{lemma:integrodiff} coincides with
  the integral operator causing the jumps. If $\psi \in
  C^{\infty}(\partial D)\cap H^{1/2}(\partial D)$ and it is continued as
  $\tilde{\psi} \in H^2(\R^d)$ so that $\psi$ and its
  tangential derivative are continued as constants along the conormal
  directions in the neighborhood of the boundary $\partial D$, then for
  every $x \in \partial D$, we have
  \[
  A_{\kappa}\psi (x) = \int_{\R^d \setminus \{0\}}
  \big(\tilde{\psi}(x+z) - \tilde{\psi}(x) - \big[\lvert z\rvert\le
  1\big] z \cdot \nabla \tilde{\psi}(x) \big)
  S_{\kappa}(x,\mathrm{d} z),
  \]
  where we have set $S_{\kappa}(x,\mathrm{d} z):= N_{\kappa}(x,x+z)\dd  \sigma_x(z)$ and
  $\sigma_x(B):= \sigma(x+B)$ for every Borel set $B \subset \R^d$. 
  In the same way we can extend the drift term so that the corresponding
  integro-differential operator is the infinitesimal generator of an
  extended process $\overline{\X}$ on the whole
  space $\R^d$.  Since we know that $\vX_t \in \partial
  D$ for all $t\geq 0$, it follows that the extended process $\overline{\X}$ will stay on the boundary
  $\partial D$ if we start it from the boundary and it coincides with
  $\vX$ there.
  
  For this extended process $\overline{\X}$ we can apply
  the result~\cite[Th{\'e}or{\`e}me 10]{LepeltierM}
  and we obtain
  \[
  \begin{split}
  &\EW_x \sum_{s \le \hat{\tau}} \phi(\vX_{s-}, \vX_s)[\vX_{s-} \ne \vX_s] \\
  &= \EW_x \int_0^{\hat{\tau}} 
  \int_{\R^d\setminus \{0\}} \phi(\vX_s, \vX_s + y)
  N_{\kappa}(\vX_s, \vX_s + y) \dd  \sigma_{\vX_s}(y)  \dd  s
  \end{split}
  \]
  for any non-negative Borel function $\phi:\partial D\times\partial
  D\rightarrow\R_+$ vanishing on the diagonal and any stopping time
  $\hat{\tau}$ of $\widehat{\X}$.
  The claim follows now in this special case by change of integration variable.

  The general case follows by scaling: Let us denote $\X^R_t :=
  R^{-1}\X_t$. This is a reflecting diffusion process corresponding to $\kappa^R$ on
  a domain $D^R$, where $D^R := R^{-1}D$ and $\kappa^R(x) :=
  R^{-2}\kappa(Rx)$. Since the diameter of $D^R$ is one, the claim holds
  for the boundary process $\vX^R$ of $\X^R$.
  
  Let $L^R$ denote the local time of $\X^R$ on the boundary $\partial
  D^R$. By definition, this is in Revuz correspondence with the surface
  measure $\sigma^R$ of the boundary $\partial D^R$. By using the Revuz
  correspondence and change of
  variables, it follows that 
  \[
  L_t^R = RL_t.
  \]

  Therefore, the right-inverse $\tau^R$ of the local time $L^R$ has a
  scaling law 
  \[
  \tau^R(t) = \tau(R^{-1} t)
  \]
  which in turn implies that the boundary processes scale by the law
  \[
  \vX^R_t = R^{-1}\vX_{R^{-1}t}
  \]
  and that $\eta$ is an $\vX$-stopping time if and only if $R\eta$ is an
  $\vX^R$-stopping time.

  If we denote by $N^R_{\kappa}$ the conormal derivative of the Poisson kernel of
  $\X^R$ multiplied by $2$ and compute the scaling law, we find out that
  \[
  N^R_{\kappa}(x,y)= R^{d-2} N_{\kappa}(Rx,Ry).
  \]
  With all these scaling laws, we are now ready to prove the claim for $\vX$. Let
  $\phi^R(x,y) := \phi(x,y)$ and let $\eta$ be an $\vX$-stopping time. We have
  \[
  \EW_x \sum_{s \le \eta} \phi(\vX_s, \vX_{s-})[ \vX_s \ne \vX_{s-}]
  =
  \widehat{\EW}_{R^{-1}x} \sum_{s \le \eta R} \phi^R(\vX^R_s, \vX^R_{s-})[ \vX^R_s
  \ne \vX^R_{s-}],
  \]
  where $\widehat{\EW}_{R^{-1}x}$ denotes the expectation given $\vX^R_0
  = R^{-1} x$.
  By the first part of the proof, the right-hand side is equal to
  \[
  \widehat{\EW}_{R^{-1}x} \int_0^{\eta R} \int_{\partial {D^R}} \phi^R(\vX^R_s,
  y) N^R_{\kappa}(\vX^R_s, y)\dd \sigma(y)\dd  s.
  \]
  With the change of variables $y' = R y$ and $s' = s R^{-1}$ and the
  scaling law $N_{\kappa}^R(\vX^R_{Rs}, R^{-1} y) = R^{d-2} N_{\kappa}(\vX_s, y)$, the
  claim follows.
\end{proof}
This result states that the pair $(N_{\kappa}(x,y)\dd\sigma(y), \mathrm {id}_t)$ is a
\emph{L\'evy system }of the Hunt
process $\vX$. 
Since the positive continuous additive functional $\mathrm {id}_t = t$ with respect to $\vX$ has the Revuz
measure $\sigma$, we obtain from \cite[Theorem A.1 (iii)]{Chen2} that $\frac12 N_{\kappa}(x,y)\dd\sigma(y)\dd \sigma(x)$ coincides with the jumping measure $\dd\widehat{\mathcal{J}}(x,y)$ of the boundary process $\vX$. In particular, the L\'evy system characterizes the boundary process completely and we have obtained another probabilistic formulation of Calder\'{o}n's problem: \emph{Given pure jump processes on $\partial D$ generated by $\Lambda_{\kappa_1}$ and $\Lambda_{\kappa_2}$, respectively. Show that the corresponding L\'{e}vy systems coincide, i.e., $N_{\kappa_1}\equiv N_{\kappa_2}$, if and only if $\kappa_1\equiv\kappa_2$.}

\subsection{Second version}
An interesting aspect with regard to the first version is the fact that it may be translated into an equivalent parabolic \emph{unique continuation problem }in terms of transition kernel densities of absorbing diffusion processes on $D$. More precisely, let $\kappa_1, \kappa_2 \in C^{0,1}(\overline{D})$ be isotropic conductivities and $p_1^0$, $p_2^0$ the corresponding transition kernel densities. Define for any function $\phi\in L^2(D)$ the functions $$v(t,x):=\int_D(p^0_1(t,x,y)-p^0_2(t,x,y))\phi(y)\dd y,\quad w(t,x):= \int_Dp^0_2(t,x,y)\phi(y)\dd y$$ and set $$c(t,x):=-\nabla\cdot(\kappa_1\nabla w(t,x))+\nabla\cdot(\kappa_2\nabla w(t,x)).$$ By these definitions, we have
\begin{equation}
\begin{cases}
\partial_t v(t,x)=\nabla\cdot(\kappa_1\nabla v(t,x))+c(t,x),\quad&(t,x)\in(0,\infty)\times D\\
v(0,x)=0,\quad& x\in D\\
v(t,x)=0,\quad& (t,x)\in(0,\infty)\times\partial D.
\end{cases}
\end{equation}
By the probabilistic formulation from above, 
the uniqueness result from Caro and Rogers~\cite{CaroRogers} corresponds to the following assertion: \emph{Assume that 
\begin{equation}
\int_0^{\infty}\partial_{\kappa\nu(x)}\partial_{\kappa\nu(y)}(p_1^0(t,x,y)-p^0_2(t,x,y))\dd t\equiv 0.
\end{equation}
Then $v$ must be identically zero for any $\phi\in L^2(D)$.}

\subsection{Third version}
Let us conclude this section by deriving yet another equivalent formulation of Calder\'{o}n's problem which is based on the construction for the reflecting Brownian motion from \cite{Hsu2}. To be precise, as in \cite{Hsu2}, we adopt It\^{o}'s idea of regarding the excursions of $\X$ from the boundary as a point process taking values in the space of excursions. 
First, we recall some definitions from \cite{Hsu2}. A measurable function $e:[0,\infty)\rightarrow \partial D\cup\{\partial\}$ is called a \emph{point function} if the set $J(e):=\{s>0:e_s\in\partial D\}$ is countable and we denote the set of point functions by $\mathcal{P}(\partial D)$. For each point  function $e$, the \emph{counting measure} on $(0,\infty)\times\partial D$ is defined by 
\begin{equation}
n_e(B):=\{(s,x)\in B:e_s=x\},\quad B\subset(0,\infty)\times\partial{D}.
\end{equation}

Given the probability space $(\widehat{\Omega}, \widehat{\F},\widehat{\mathbb{P}}_x)$ with filtration $\{\widehat{\F}_t, t\geq 0\}$, a function $e:\widehat{\Omega}\rightarrow\mathcal{P}(\partial D)$ is called a \emph{point process} if for each $B\subset\B(\partial D)$, the increasing process $t\mapsto n_e((0,t]\times B)$ is adapted to $\{\widehat{\F}_t,t\geq 0\}$.

Let $W^{a,b}$ denote the space of continuous \emph{excursions} of $\X$ from $a\in\partial D$ to $b\in\partial D$, i.e., the space of continuous paths $e\in C([0,\infty);\overline{D})$ such that $e(0)=a$ and there exists an $l>0$ such that $e(t)\in D$ for all $0<t<l$ and $e(t)=b$ for all $t\geq l$. If the space $W^{a,b}$ is equipped with the filtration $\sigma(e(s),e\in W^{a,b},0\leq s\leq t)$,
then the space of all excursions is given by 
\begin{equation}
W=\bigcup_{a\in\partial D}W^a, 
\end{equation}
where 
\begin{equation}
W^a:=\bigcup_{b\in\partial D, b\neq a}W^{a,b}.
\end{equation}

The same proof as in~\cite[Proposition 4.4]{Hsu2} shows that the random
set of jump times $\{ \vX_{s-} \ne \vX_s\}$ is a countable and dense set
and that there is a constant $c > 0$, depending only on the domain $D$, such that
after any given time $t \geq 0$, there are always infinitely many jumps
of size at least $c$. Therefore, we set 
$$J:=\{s\in (0,\infty):\tau(s-)<\tau(s)\}\quad\text{and}\quad l(s):=\tau(s)-\tau(s-)$$
and define the \emph{point process of excursions }of $\X$ as a $W$-valued point process such that
\begin{equation}
e_s(t):=\begin{cases}
\X_{t+\tau(s-)},\quad &t\leq l(s)\\
\X_{\tau(s)},\quad&t > l(s),
\end{cases}
 \end{equation}
 if $s\in J$, whereas $e_s:=\partial$ if $s\not\in J$. 
 
Clearly, the family $\{e_t,t\leq 0\}$ is adapted to $\{\widehat{\F}_{t},t\geq 0\}$. The following definition yields a useful characterization of point processes.
\begin{definition} 
A $\sigma$-finite random measure $\hat{n}_e$ on the measurable space $((0,\infty)\times\partial D,\B((0,\infty))\times \B(\partial D))$ is called the \emph{compensating measure }of a point process $e$ if there is a sequence $\{B_k\}_{k\in\N}\subset\B(\partial D)$, exhausting $\partial D$, such that 
\begin{enumerate}
\item[(i)] the mapping $t\mapsto \hat{n}_e((0,t)\times B_k)$ is continuous for every $k\in \N$;
\item[(ii)] $\EW \hat{n}_e((0,t)\times B_k)<\infty$ for every $k\in\N$;
\item[(iii)] for every $B\in\B(\partial D)$ contained in some $B_k$, the process
\begin{equation*}
t\mapsto n_e((0,t)\times B)-\hat{n}_e((0,t)\times B)
\end{equation*}
is a martingale with respect to $\{\widehat{\F}_{t},t\geq 0\}$.
\end{enumerate}
\end{definition}
The following theorem is a straightforward generalization of \cite[Theorem 5.1]{Hsu} for the reflecting Brownian motion. It
provides a \emph{compatibility condition}, connecting the excursion law $\mathbb{P}_{a,b}$ on $W^{a,b}$, which is completely determined by $p^0$, with the L\'{e}vy system of $\widehat{\X}$. 

\begin{theorem}
Let $\kappa:\overline{D}\rightarrow\R^{d\times d}$ be a symmetric, uniformly bounded and uniformly elliptic conductivity with components $\kappa_{ij}\in C^{0,1}(\overline{D})$, $i,j=1,...,d$, such that $\kappa$ satisfies (A1). 
Then the point process of excursions of $\X$ admits the compensating measure 
\begin{equation}\label{eqn:compensating}
\hat{n}_e((0,t)\times B)=\int_0^t\mathbb{Q}_{\widehat{\X}_s}\{B\cap\{e:e(0)=\widehat{\X}_s\}\}\dd s, 
\end{equation}
where the \emph{excursion law }$\mathbb{Q}_a$ from $a\in\partial D$ is a $\sigma$-finite measure on $W^a$ defined by
\begin{equation}\label{eqn:excursion}
\mathbb{Q}_{a}\{B\}=\int_{\partial D}\mathbb{P}_{a,b}\{B\cap\{e:e(l)=b\}\}N_{\kappa}(a,b)\dd\sigma(b)
\end{equation}
for every measurable $B\subset W^a$. \end{theorem}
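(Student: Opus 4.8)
The plan is to identify $\hat{n}_e$ with the compensating measure (the dual predictable projection) of the excursion point process $e$, i.e.\ to verify property (iii) of the definition; properties (i) and (ii) are immediate, since continuity of $t \mapsto \hat{n}_e((0,t)\times B_k)$ is clear from the integral form~\eqref{eqn:compensating} and finiteness follows from the local $\sigma$-integrability of $N_\kappa$ established in Theorem~\ref{lemma:integrodiff}. The martingale property (iii) is equivalent, by the standard monotone-class extension from stopping times to predictable integrands, to the L\'evy-system identity
\[
\widehat{\EW}_x \sum_{s \in J} Z_s\, \Phi(e_s) = \widehat{\EW}_x \int_0^\infty Z_s \Big( \int_W \Phi(e)\, \mathbb{Q}_{\widehat{\X}_s}(\dd e)\Big)\dd s
\]
for every bounded $\{\widehat{\F}_t\}$-predictable process $Z$ and every bounded measurable functional $\Phi$ on $W$; taking $\Phi(e) = [\,e\in B\,]$ and $Z_s = [\,s \le t\,]$ returns~\eqref{eqn:compensating}. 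So it suffices to prove this identity.

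The key structural input is the conditional law of an excursion given its endpoints. For $s \in J$ the path $e_s$ starts at $a := \widehat{\X}_{s-} = \X_{\tau(s-)}$, stays in $D$ throughout its lifetime $l(s)$, and terminates at $b := \widehat{\X}_s = \X_{\tau(s)}$. Since $\tau(s-)$ is an $\{\F_t\}$-stopping time at which $\X$ sits on $\partial D$, I would apply the strong Markov property of $\X$ at $\tau(s-)$ together with the identification of the interior motion with the absorbing diffusion $\X^0$ of density $p^0$. This shows that, conditionally on $\{e_s(0) = a,\, e_s(l) = b\}$ and on $\widehat{\F}_{s-}$, the excursion $e_s$ is distributed according to the bridge law $\mathbb{P}_{a,b}$ on $W^{a,b}$ determined by $p^0$; it is exactly here that $p^0$ enters. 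This is the generalization of~\cite[Proposition 4.4]{Hsu2}, whose argument also yields that $\{s : \widehat{\X}_{s-}\neq\widehat{\X}_s\}$ is countable and dense and carries infinitely many jumps of size at least some $c>0$ after every time, so that $e$ is a genuine $W$-valued point process.

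Granting the conditional law, set $\psi(a,b) := \int_{W^{a,b}} \Phi(e)\, \mathbb{P}_{a,b}(\dd e)$, a bounded Borel function on $\partial D\times\partial D$ that vanishes on the diagonal. The tower property over $\widehat{\F}_{s-}$ and the endpoints gives
\[
\widehat{\EW}_x \sum_{s \in J} Z_s\, \Phi(e_s) = \widehat{\EW}_x \sum_{s \in J} Z_s\, \psi(\widehat{\X}_{s-}, \widehat{\X}_s).
\]
Now the L\'evy system of $\widehat{\X}$ from Lemma~\ref{lem:levy}, extended to the predictable integrand $Z$, rewrites the right-hand side as
\[
\widehat{\EW}_x \int_0^\infty Z_s \int_{\partial D} \psi(\widehat{\X}_s, b)\, N_{\kappa}(\widehat{\X}_s, b)\dd\sigma(b)\dd s,
\]
and by the definition~\eqref{eqn:excursion} of the excursion law the inner integral is precisely $\int_W \Phi(e)\, \mathbb{Q}_{\widehat{\X}_s}(\dd e)$. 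This is the desired identity, and hence~\eqref{eqn:compensating}.

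The main obstacle is the conditional-law step and the attendant disintegration~\eqref{eqn:excursion} of It\^o's excursion measure. Because the reflecting diffusion $\X$ need not solve a stochastic differential equation, we cannot appeal to It\^o calculus and must argue purely through the strong Markov property at the left endpoints $\tau(s-)$ of the excursion intervals, combined with the local-time time-change (Lemma~\ref{lem:change}), to verify both that the interior dynamics coincide with $\X^0$ and that the entrance-law/bridge decomposition is governed by $p^0$ and the kernel $N_\kappa$. The measurability of $s \mapsto e_s$ and the construction of an exhausting sequence $\{B_k\}$ are handled as in~\cite{Hsu2}.
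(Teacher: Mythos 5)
Your proposal is correct and is essentially the argument the paper itself relies on: the paper gives no proof of this theorem at all, asserting it as a \emph{straightforward generalization} of Hsu's compensating-measure theorem for reflecting Brownian motion \cite{Hsu2}, and your chain of steps---the master-formula characterization of the compensator, the disintegration of each excursion into its endpoint pair plus the bridge law $\mathbb{P}_{a,b}$ determined by $p^0$ via the strong Markov property at $\tau(s-)$, and then the L\'evy system of Lemma~\ref{lem:levy} to convert the jump sum into the time integral against $N_{\kappa}(\cdot,\cdot)\dd\sigma$---is exactly that generalization. The single step you flag as the main obstacle, the conditional bridge law (which cannot come from naively conditioning on the null event $s\in J$ and requires Hsu's approximation of excursion intervals/entrance-law machinery), is indeed the only substantive content hidden behind the paper's citation, so your sketch is, if anything, more explicit than the paper's own treatment.
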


We have thus arrived at the following equivalent probabilistic formulation of Calder\'{o}n's problem: 
\emph{Given the boundary process $\widehat{\X}$ with its corresponding L\'{e}vy system  $(N_{\kappa}(x,y)\dd\sigma(y), \mathrm {id}_t)$, show that $\mathbb{P}_{a,b}$ determined by $p^0$ is the unique excursion law such that (\ref{eqn:compensating}) yields the compensating measure of the point process of excursions of some reflecting diffusion process on $\overline{D}$.} 

\section{Conclusion and outlook}\label{section6}
In this work, we have obtained a probabilistic formulation of Calder\'{o}n's inverse conductivity problem. 
This formulation comes in three equivalent versions and each of them may yield both
a novel perspective as well as a novel set of (probabilistic) tools when it comes to studying questions related to the unique determinability of conductivities from boundary data.
Indeed, it was shown in~\cite{Hsu2} that for the case $\kappa \equiv 1/2$ and under a certain consistency assumption, the reflecting Brownian motion can be reconstructed from its point process of excursions and the boundary process. However, the consistency assumption was derived by using the reflecting Brownian motion to begin with and as it is noted in~\cite{Hsu2}, there might 
be other consistency assumptions leading to other constructions. Showing that there are no other consistent constructions is equivalent to the probabilistic inverse problem which will be the subject of future research.

\section*{Acknowledgments} 
The research of M.~Simon was supported by the Deutsche Forschungsgemeinschaft (DFG) under grant HA 2121/8 -1 58306.
The research of P.~Piiroinen was supported by Academy of Finland (AF)
under Finnish Centre of Excellence in Inverse Problems Research
2012--2017, decision number 250215. He has also been supported by an AF
project, decision number 141075 and ERC grant 267700, InvProb. The
authors are pleased to thank Martin Hanke for insightful comments on
this work.


\end{document}